
%
\documentclass[]{elsarticle}

    \makeatletter
    \def\ps@pprintTitle{%
      \let\@oddhead\@empty
      \let\@evenhead\@empty
      \def\@oddfoot{\reset@font\hfil\thepage\hfil}
      \let\@evenfoot\@oddfoot
    }
    \makeatother
%
%
\usepackage{amsmath}%
\usepackage{amsfonts}%
\usepackage{amssymb}%
\usepackage{graphicx}
\usepackage{tikz}
%
\newtheorem{theorem}{Theorem}

\newtheorem{algorithm}{Algorithm}

\newtheorem{conjecture}{Conjecture}
\newtheorem{corollary}{Corollary}

\newtheorem{lemma}{Lemma}

\newtheorem{proposition}{Proposition}
\newtheorem{remark}{Remark}

\numberwithin{equation}{section}

\newproof{proof}{Proof}



\begin{document}

\title{Internal Partitions of Regular Graphs}
\author{Amir Ban}
\ead{amirban@netvision.net.il}
\address{Center for the Study of Rationality, Hebrew University, Jerusalem, Israel}
\author{Nati Linial}
\ead{nati@cs.huji.ac.il} 
\address{School of Computer Science and Engineering, Hebrew University, Jerusalem, Israel}
\date{}

\begin{abstract}

An {\em internal partition} of an $n$-vertex graph $G=(V,E)$ is a partition of $V$ such that every vertex has at least as many neighbors in its own part as in the other part. It has been conjectured that every $d$-regular graph with $n>N(d)$ vertices has an internal partition. Here we prove this for $d=6$. The case $d=n-4$ is of particular interest and leads to interesting new open problems on cubic graphs. We also provide new lower bounds on $N(d)$ and find new families of graphs with no internal partitions. Weighted versions of these problems are considered as well.

\end{abstract}

\maketitle

\section{Introduction}

It is well-known that every finite graph $G=(V,E)$ has an {\em external partition}, i.e., a splitting of $V$ into two parts such that each vertex has at least half of its neighbors in the other part. This is, e.g., true for $G$'s max-cut partition. Much less is known about the {\em internal partition} problem in which $V$ is split into two non-empty parts, such that each vertex has at least half of its neighbors in its own part. Not all graphs have an internal partition and their existence is proved only for certain classes of graphs. Several investigators have raised the conjecture that for every $d$ there is an $n_0$ such that every $d$-regular graph with at least $n_0$ vertices has an internal partition. Here we prove the case $d=6$ of this conjecture.

A related intriguing concept in this area is the notion of {\em external bisection}. This is an external partition in which the two parts have the same cardinality. We conjecture that the Petersen graph is the only connected cubic graph with no external bisection. We take some steps in resolving this problem.

These concepts have emerged in several different areas and as a result there is an abundance of terminologies here. Thus Gerber and Kobler\cite{Gerber} used the term {\em satisfactory partition} for internal partitions. Internal/external partitions are called {\em friendly} and {\em unfriendly} partitions sometimes. Morris\cite{Morris} studied social learning, and considered a more general problem. Now we want to partition $V=A \dot\cup B$ with $A, B \neq \emptyset$ such that every $x\in A$ (resp $y \in B$) has at least $qd(x)$ of its neighbors in $A$ (resp. $\ge(1-q)d(y)$  neighbors in $B$). He refers to such sets as {\em ($q$/$1-q$)-cohesive}. Here we use the term {\em $q$-internal partitions}. The complementary notion of {\em $q$-external partitions} is considered as well.

\tikzstyle{gray}=[circle, draw, fill=gray!50, inner sep=0pt, minimum width=6pt]
\tikzstyle{green}=[circle, draw, fill=green!50, inner sep=0pt, minimum width=6pt]
\tikzstyle{orange}=[circle, draw, fill=orange!50, inner sep=0pt, minimum width=6pt]

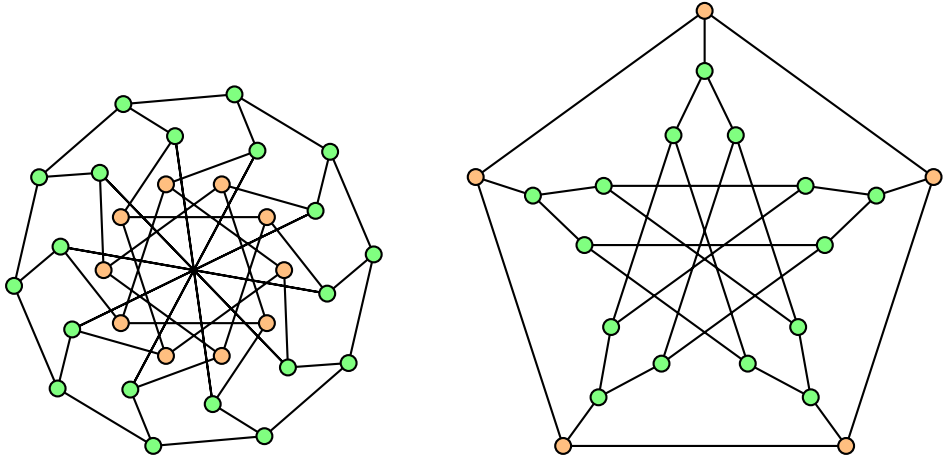
\begin{figure}[tbp]

\begin{minipage}[t]{0.5\textwidth}
\begin{tikzpicture}[thick,scale=0.6]
    \draw \foreach \x in {0,36,...,324}
    {
        (\x:2) node [orange] {}  -- (\x+108:2)
        (\x-10:3) node [green] {} -- (\x+5:4)
        (\x-10:3) -- (\x+36:2)
        (\x-10:3) --(\x+170:3)
        (\x+5:4) node [green] {} -- (\x+41:4)
    };

\end{tikzpicture}\quad

\end{minipage}
\begin{minipage}[t]{0.5\textwidth}

%
\begin{tikzpicture}[thick,scale=0.8]%
    \draw \foreach \x in {18,90,...,306} {
        (\x:4) node [orange] {} -- (\x+72:4)
        (\x:4) -- (\x:3) node [green] {}
        (\x:3) -- (\x+15:2) node [green] {}
        (\x:3) -- (\x-15:2) node [green] {}
        (\x+15:2) -- (\x+144-15:2)
        (\x-15:2) -- (\x+144+15:2)
};
\end{tikzpicture}

\end{minipage}

\caption[Examples of internal partitions]{Examples of internal partitions}
\label{cubic-examples}
\end{figure}

Figure \ref{cubic-examples} shows examples of internal partitions of regular cubic graphs.

Bazgan, Tuza and Vanderpooten have written several papers~\cite{Bazgan2003,Bazgan2006} on internal partitions. In~\cite{Bazgan2010} they give a survey of this area. Much of their work concerns the complexity of finding such partitions, a problem which we do not address here.

Our own interest in this subject arose in our studies of learning in social or geographical networks. Vertices in these graphs represent individuals and edges stand for social connection or geographical proximity. The individuals adopt one of two choices of a social attribute (e.g. PC or Mac user). Society evolves over time, with each individual adopting the choice of the majority of her neighbors. We asked whether a stable, diverse assignment of choices is possible in such a society. This amounts to finding an internal partition if the social choices are equally persuasive. It is also of interest to consider the problem when choices carry different persuasive power (say a neighbor who is a Mac user is more persuasive than a PC neighbor). If the merits are in proportion $q : 1-q$, this leads to the problem of finding a {\em $q$-internal partition}.

Thomassen \cite{Thomassen} showed that for every two integers $s, t > 0$ there is a $g=g(s,t)$ such that every graph $G=(V,E)$ of minimum degree at least $g$ has a partition $V=V_1\dot\cup V_2$ so that the induced subgraphs $G(V_1), G(V_2)$ have minimum degree at least $s, t$, respectively. He conjectured that the same holds with $g(s,t) = s + t + 1$, which would be tight for complete graphs. Stiebitz \cite{Stiebitz} proved this conjecture, and extended it as follows: For every $a, b: V \mapsto \mathbb{Z}_{+}$ such that $\forall v \in V, d_G(v) \geq a(v) + b(v) + 1$, there exists a partition of $V=A\dot\cup B$, such that $\forall v \in A, d_A(v) \geq a(v)$ and $\forall v \in B, d_B(v) \geq b(v)$.  Kaneko~\cite{Kaneko} showed that in triangle-free graphs the same conclusion holds under the weaker assumption $d_G(v) \geq a(v) + b(v)$.

Stiebitz's result shows that, given $q \in (0,1)$, every graph has a non-trivial partition which is at most one edge (for each vertex) short of being a $q$-internal partition. Shafique and Dutton~\cite{Shafique} showed the existence of internal partitions in all cubic graphs except $K_4$ and $K_{3,3}$ and in all 4-regular graphs except $K_5$. In this paper, we settle the problem for 6-regular graphs.

Shafique and Dutton also conjectured that $K_{2k+1}$ is the only $d=2k$-regular graph with no internal partition. We disprove this and present a number of counterexamples. Many of these exceptions are with $d \geq n-4$. This range turns out to be of interest and we discuss it as well. As we show, there exist $d$-regular $n$-vertex graphs with no internal partitions with both $d$ and $n-d$ arbitrarily large. We conjecture that every $2k$-regular graph with $n \geq 4k$ has an internal partition. In the process, we consider external bisections of regular graphs, and especially cubic graphs. We note that all class-I cubic graphs have an external bisection, and speculate that for class-II cubic graphs, only graphs that have the Petersen graph as a component do {\em not} have such a bisection.

Finally, we conjecture that there is a function $\mu=\mu(d,q)$ such that if $qd$ is an integer, then every $d$-regular graph has a $q$-internal partition. We also conjecture this for $q = 1/2$ and $d$ odd. As we show, for $d$ fixed and large $n$, every $n$-vertex $d$-regular graph has {\sl many} $q$-internal partitions for {\em some} $q$. This lends some support to our conjecture. We also discuss an algorithm that generates $q$-internal partitions of a graph for many, and plausibly all values of $q$. This sheds light on what causes a graph to be non-partitionable.

\section{Terminology}
We consider undirected graphs $G=(V,E)$ with $n$ vertices. For $S \subset V$, we denote by $G(S)$ the induced subgraph of $S$. The degree of $x\in V$ is denoted by $d(v)=d_G(v)$ and the number of neighbors that $v$ has in $S\subseteq V$ is called $d_S(v)$. The complement of $G$ is denoted by $\bar{G}$.

 A {\em bisection} of $V=A\dot\cup B$ is a partition with $|A| = |B|$. If $||A| - |B|| \leq 1$, then we call it a near-bisection. Corresponding to the partition $(A,B)$ of $V$ is the {\em cut} $E(A,B)=E_G(A,B)=\{xy\in E|x\in A, y\in B\}$. For $x \in A$ and $y \in B$ we call $d_A(x), d_B(y)$, respectively, the vertices' {\em indegrees}, and $d_B(x), d_A(y)$ the {\em outdegrees}. These terms usually refer to directed graphs, but we could not resist the convenience of using them in the present context.

A subset $S \subseteq V$ is called {\em $p$-cohesive} if $\forall x \in S, d_S(x) \geq p$. It is called a {\em $p$-crumble} if no $S' \subseteq S$ is $p$-cohesive. (Note that our notion of cohesion differs from that of Morris~\cite{Morris}).

A partition $(A,B)$ is {\em $q$-internal} for $q \in (0,1)$ if $\forall x \in A, d_A(x) \geq qd_G$ and  $\forall x \in B, d_B(x) \geq (1-q)d_G(x)$. A $\frac{1}{2}$-internal partition is simply {\em internal}. 

If $\forall x \in A, d_B(x) \geq qd_G$ and  $\forall x \in B, d_A(x) \geq (1-q)d_G(x)$ we call the partition {\em $q$-external}. A $\frac{1}{2}$-external partition is {\em external}.

A $q$-internal or a $q$-external partition is called {\em integral} if for every $v \in V$, $qd_G(v)$ is an integer.

A $q$-internal or a $q$-external partition $(A,B)$ is called {\em exact} if $|A| = qn$, and {\em near-exact} if $||A| - qn| < 1$. A $\frac{1}{2}$-exact partition is a {\em bisection}. For $q = \frac{1}{2}$, near-exact partitions are {\em near-bisections}.

\section{Internal Partitions of 6-Regular Graphs}

\begin{lemma}
\label{l}
Let $G=(V,E)$ be a graph with minimal degree $d$. For $0 < k < |V|$, let $(A,B)$ be a partition of $V$ that attains $\min |E(A,B)|$ over all partitions with $|A|=k$ or $|B|=k$. Then, either:
\begin{enumerate}
\item \label{l1} $A$ is $l$-cohesive and $B$ is $m$-cohesive for some integers $l,m$ with $l + m = d$, or:
\item \begin{enumerate}
   \item \label{l2a} $A$ is $l$-cohesive and $B$ is $m$-cohesive for some integers $l,m$ with $l + m = d - 1$, and:
   \item \label{l2b} The vertices in $A$ with indegree $l$ and the vertices in $B$ with indegree $m$ form a complete bipartite subgraph in $G$, and:
   \item \label{l2c} For every $x \in A$ with indegree $l$, $B \cup \{x\}$ is $(m+1)$-cohesive. Similarly, $A \cup \{x\}$ is $(l+1)$-cohesive for every $x \in B$ with indegree $m$.
   \end{enumerate}
\end{enumerate}
\end{lemma}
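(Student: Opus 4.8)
The plan is to run a local-optimality (exchange) argument. Write $l=\min_{x\in A}d_A(x)$ and $m=\min_{y\in B}d_B(y)$, so that $A$ is $l$-cohesive and $B$ is $m$-cohesive, with both bounds tight. The crucial observation is that the feasibility constraint ``$|A|=k$ or $|B|=k$'' forbids moving a single vertex across the cut, since that changes both part sizes; the moves that preserve feasibility are \emph{swaps}, exchanging a vertex $x\in A$ with a vertex $y\in B$ to form $A'=(A\setminus\{x\})\cup\{y\}$ and $B'=(B\setminus\{y\})\cup\{x\}$. First I would compute the effect of such a swap on the cut. Writing $\epsilon=1$ if $xy\in E$ and $\epsilon=0$ otherwise, a direct count of the edges incident to $x$ and $y$ (every other edge is unaffected) gives
\[
\Delta \;=\; |E(A',B')|-|E(A,B)| \;=\; d_A(x)+d_B(y)-d_B(x)-d_A(y)+2\epsilon \;\ge\; 0,
\]
the inequality holding because $(A,B)$ minimizes the cut among feasible partitions and the swap keeps both sizes fixed.

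Substituting $d_B(x)=d_G(x)-d_A(x)$ and $d_A(y)=d_G(y)-d_B(y)$ rewrites this as $2d_A(x)+2d_B(y)\ge d_G(x)+d_G(y)-2\epsilon$. Applying it to an $x\in A$ attaining $d_A(x)=l$ and a $y\in B$ attaining $d_B(y)=m$, and using $d_G\ge d$, I obtain $l+m\ge d-\epsilon\ge d-1$. If $l+m\ge d$, I would simply decrease $l$ and $m$ to nonnegative integers summing to exactly $d$ (this only weakens the cohesion requirements, so $A$ stays $l$-cohesive and $B$ stays $m$-cohesive), which is conclusion~\ref{l1}.

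The remaining case $l+m=d-1$ is where I expect the real work to be, and here the whole chain of inequalities must be tight. For \emph{every} pair $x\in A$, $y\in B$ with $d_A(x)=l$ and $d_B(y)=m$, the relation $2(d-1)\ge d_G(x)+d_G(y)-2\epsilon\ge 2d-2\epsilon$ forces both $\epsilon=1$ and $d_G(x)=d_G(y)=d$. The first conclusion says exactly that the indegree-$l$ vertices of $A$ and the indegree-$m$ vertices of $B$ are pairwise adjacent, which is part~\ref{l2b}. For part~\ref{l2c}, take $x\in A$ with $d_A(x)=l$; since $d_G(x)=d$ we have $d_B(x)=d-l=m+1$, so $x$ itself already has $m+1$ neighbours in $B\cup\{x\}$. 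Any $w\in B$ with $d_B(w)>m$ has at least $m+1$ neighbours there, while any $w\in B$ with $d_B(w)=m$ is adjacent to $x$ by part~\ref{l2b} and therefore gains one neighbour, reaching $m+1$. Hence $B\cup\{x\}$ is $(m+1)$-cohesive, and the dual statement for $A\cup\{x\}$ follows by the symmetric argument.

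The main obstacle is the correct setup of the exchange count and the realization that, because single-vertex moves leave the feasible family, the optimality of $(A,B)$ constrains $l$ and $m$ only \emph{jointly}, through swaps, rather than bounding each separately. The delicate step is the boundary case $l+m=d-1$: one must extract simultaneously the adjacency structure of part~\ref{l2b} and the exact-degree condition $d_G=d$ from the single tight inequality, and then feed part~\ref{l2b} back in to deduce the cohesion statement~\ref{l2c}.
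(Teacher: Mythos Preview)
Your proof is correct and follows essentially the same swap/exchange argument as the paper: compute the change in the cut under exchanging $x\in A$ with $y\in B$, use minimality to get $d_A(x)+d_B(y)\ge d-\epsilon$, and split into the two cases according to whether some minimum-indegree pair is non-adjacent. Your treatment is in fact slightly more explicit than the paper's in two places---you spell out why one may lower $l,m$ to sum exactly to $d$ in case~\ref{l1}, and in case~\ref{l2c} you verify both that the transplanted vertex $x$ has $d_B(x)=d-l=m+1$ (using the forced equality $d_G(x)=d$) and that each minimum-indegree vertex of $B$ gains a neighbour---whereas the paper compresses~\ref{l2c} into a one-line remark.
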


\begin{proof}
Let $x \in A, y \in B$. If $xy \notin E$ then
\begin{align*}
|E(((A \backslash \{x\}) \cup \{y\}, (B \backslash \{y\}) \cup \{x\})| - |E(A,B)| & = \\
 =d_A(x) - d_B(x) + d_B(y) - d_A(y) \leq &\\
\leq 2[d_A(x) + d_B(y) - d]
\end{align*}
If $xy\in E$, then
\begin{align*}
|E(((A \backslash \{x\}) \cup \{y\}, (B \backslash \{y\}) \cup \{x\})| - |E(A,B)| & = \\
= d_A(x) - d_B(x) + (d_B(y) + 1) - (d_A(y) - 1) \leq &\\
\leq 2[d_A(x) + d_B(y) - (d - 1)]
\end{align*}
Since $E(A,B)$ is minimal, it follows that the sum of indegrees is at least $d-1$ if $x, y$ are adjacent, and $d$ otherwise.

Let us apply this for $x,y$ of minimum indegree. Then (\ref{l1}) follows if there is such a pair with $xy \notin E$. On the other hand, if $xy\in E$ for all such pairs, then (\ref{l2a}) and (\ref{l2b}) follow. We obtain (\ref{l2c}) by observing that increasing by one the indegree of all minimum indegree vertices in a subset, increases the minimum indegree of the subset by one.
\qed
\end{proof}

\begin{corollary}
Every $n$-vertex $d$-regular graph has a $\lceil \frac{d}{2} \rceil$-cohesive set of at most $\lceil \frac{n}{2} \rceil$ vertices (resp. $\frac{n}{2} + 1$) for $d$ even (for $d$ odd).
\end{corollary}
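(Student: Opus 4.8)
The plan is to apply Lemma \ref{l} with $k = \lceil n/2 \rceil$. Then the minimizing partition $(A,B)$ has parts of sizes $\lceil n/2\rceil$ and $\lfloor n/2\rfloor$; in particular both parts have at most $\lceil n/2\rceil$ vertices. I will then extract a $\lceil d/2\rceil$-cohesive set from the cohesion guarantees the lemma provides, handling the even and odd cases of $d$ separately. The key arithmetic fact I will use repeatedly is that for integers $l,m$ one has $\max(l,m) \ge \lceil (l+m)/2 \rceil$.

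For $d$ even the conclusion is immediate. In both alternatives of the lemma we have $l+m \ge d-1$, and since $d$ is even, $\lceil (d-1)/2 \rceil = d/2$. Hence $\max(l,m) \ge \lceil (l+m)/2\rceil \ge d/2 = \lceil d/2\rceil$, so whichever of $A,B$ is the more cohesive part is already $\lceil d/2\rceil$-cohesive, and it has at most $\lceil n/2\rceil$ vertices, as required.

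For $d$ odd the same argument settles alternative (\ref{l1}), where $l+m=d$ forces $\max(l,m)\ge (d+1)/2 = \lceil d/2\rceil$. The only delicate case is the second alternative with $l+m=d-1$ even and $l=m=(d-1)/2$, where neither part need be $\lceil d/2\rceil$-cohesive on its own; here I will invoke (\ref{l2c}). If, say, $A$ contains no vertex of indegree $l$, then every vertex of $A$ has indegree at least $l+1 = \lceil d/2\rceil$, so $A$ itself is the desired set (of size $\le \lceil n/2\rceil$), and symmetrically for $B$. Otherwise both parts contain a minimum-indegree vertex, and (\ref{l2c}) lets me enlarge the \emph{smaller} part: moving a min-indegree vertex of $A$ into $B$ (or of $B$ into $A$) produces a set that is $(\lfloor d/2\rfloor + 1) = \lceil d/2\rceil$-cohesive. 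Choosing to enlarge the part of size $\lfloor n/2\rfloor$ keeps the total at $\lfloor n/2\rfloor + 1 \le n/2 + 1$.

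I expect no serious obstacle, since Lemma \ref{l} does the heavy lifting and what remains is essentially bookkeeping with the rounding. The two points that demand care are verifying $\max(l,m)\ge\lceil d/2\rceil$ from $l+m\ge d-1$ in the even case, and, in the single exceptional $l=m$ configuration for odd $d$, always adding the extra vertex to the smaller of the two parts so that the bound $n/2+1$ is respected; this is precisely why (\ref{l2c}) is stated symmetrically in $A$ and $B$.
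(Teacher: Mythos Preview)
Your proposal is correct and follows essentially the same approach as the paper: apply Lemma~\ref{l} to a minimum-cut near-bisection, observe that one side already attains cohesion $\lceil d/2\rceil$ except possibly when $d$ is odd and $l=m=(d-1)/2$, and in that last case invoke (\ref{l2c}) to add one vertex. Your write-up is more explicit about the arithmetic and the choice of which side to enlarge (note, incidentally, that for $d$ odd a $d$-regular graph has $n$ even, so both parts have size $n/2$ and the choice is moot), but the underlying argument is identical to the paper's.
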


\begin{proof}
Consider a near-bisection of $G$ that minimizes $|E(A,B)|$. By Lemma \ref{l} if $d$ is even, at least one of $A, B$ is $\frac{d}{2}$-cohesive. If $d$ is odd, and if neither $A$ nor $B$ are $\lceil \frac{d}{2} \rceil$-cohesive, then by (\ref{l2a}) both are $\lfloor \frac{d}{2} \rfloor$-cohesive, and by (\ref{l2c}) each can be made $\lceil \frac{d}{2} \rceil$-cohesive by adding a vertex of the other.
\qed
\end{proof}

\begin{theorem}
\label{6regular}
Every $6$-regular graph with at least $14$ vertices has an internal partition.
\end{theorem}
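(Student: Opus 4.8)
Since an internal partition of a $6$-regular graph is exactly a partition $V=A\dot\cup B$ into two nonempty parts that are both $3$-cohesive, the plan is to exhibit two disjoint $3$-cohesive sets covering $V$. First I would dispose of the disconnected case: if $G$ has at least two components, distribute the components between $A$ and $B$ so that both are nonempty; every vertex then retains all six neighbors in its own part, so the partition is internal. Hence I may assume $G$ is connected, which in particular forbids a $6$-cohesive proper part.

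The engine of the argument is a one-sided repair move. Suppose $A$ is $3$-cohesive and $B=V\setminus A$ is nonempty; repeatedly move into $A$ any $y\in B$ with $d_B(y)\le 2$. Such a $y$ has $d_A(y)=6-d_B(y)\ge 4$, so after the move it keeps at least four neighbors in its own part, and enlarging $A$ can only raise the indegrees of the old members of $A$. Thus $A$ stays $3$-cohesive throughout and the process halts exactly at the $3$-core of $G(B)$. If this $3$-core is nonempty it is $3$-cohesive, its complement (the enlarged $A$) is $3$-cohesive, and both are nonempty, giving an internal partition. To start the engine I would take a near-bisection $(A,B)$ minimizing $|E(A,B)|$; by the reasoning of the corollary to Lemma~\ref{l}, for the even degree $d=6$ at least one side, say $A$, is $3$-cohesive. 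So the whole theorem reduces to the single case in which the $3$-core of $G(B)$ is empty, i.e.\ $B$ is a $3$-crumble and $G(B)$ is $2$-degenerate.

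I would then narrow this obstruction by counting. A $2$-degenerate graph on $|B|$ vertices has at most $2|B|-3$ internal edges, so the cut satisfies $|E(A,B)|=\sum_{y\in B}d_A(y)\ge 6|B|-2(2|B|-3)=2|B|+6$. On the other hand, if $A$ were $l$-cohesive with $l\ge 4$, then every $a\in A$ would have $d_B(a)\le 2$, giving $|E(A,B)|\le 2|A|$; combined with the near-bisection bound $|A|\le|B|+1$ this forces $2|B|+6\le 2|B|+2$, which is absurd. Hence alternative~\ref{l1} of Lemma~\ref{l} and every instance of the second alternative with $l\ge 4$ are excluded, and the only surviving configuration is the second alternative with $l=3,\ m=2$: $A$ is exactly $3$-cohesive, $B$ is a $2$-cohesive $3$-crumble, and by \ref{l2b}--\ref{l2c} the minimum-indegree vertices of the two sides span a complete bipartite graph carrying the cohesiveness gain of property~\ref{l2c}.

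The crux is this last configuration, which is genuinely where non-partitionable graphs such as $K_7$ live (there $A,B$ of sizes $4,3$ realize exactly $l=3,m=2$, and no internal partition exists). Here the near-bisection is no longer the right object; I would instead search for a $3$-cohesive set on the $B$-side obtained by adjoining to part of $B$ a bounded number of minimum-indegree vertices of $A$, imported through the complete-bipartite interface \ref{l2b} and the gain property~\ref{l2c}, while checking via the degeneracy bound and $6$-regularity that the diminished $A$ stays $3$-cohesive. The main obstacle is to keep both sides $3$-cohesive \emph{simultaneously}, so that curing a deficient vertex on one side does not cascade into fresh deficiencies on the other, and then to dispatch by hand the finite family of configurations that survive the counting. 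It is precisely this residual finite analysis that fixes the threshold: every surviving obstruction has few vertices, so all of them are eliminated once $n\ge 14$.
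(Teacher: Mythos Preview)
Your setup matches the paper: take a min-cut near-bisection, apply Lemma~\ref{l} to get $A$ $3$-cohesive, run the repair move on $B$, and use the assumed non-existence of an internal partition to force $B$ to be a $3$-crumble. Your degeneracy count $|E(A,B)|\ge 2|B|+6$ is exactly the inequality the paper obtains by tracking the cut along the repair process, so up to that point you and the paper are doing the same thing.

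The genuine gap is in the ``crux''. You leave the $l=3,\,m=2$ case as an unspecified finite analysis, proposing to import indegree-$3$ vertices of $A$ into $B$ via property~(\ref{l2c}). That property does make $B\cup\{x\}$ $3$-cohesive, but removing $x$ from $A$ drops the indegree of each of $x$'s three $A$-neighbors, and you give no mechanism to stop the resulting cascade; nothing in your outline explains why this terminates or why it pins the threshold at $14$. The paper's actual argument goes in the \emph{opposite} direction: it first uses (\ref{l2b}) to bound the number of outdegree-$3$ vertices in $A$ by $4$, giving $|E(A,B)|\le 2|A|+4$ and hence the tight equality $|A|=|B|+1$; it then moves two outdegree-$4$ vertices \emph{from $B$ into $A$}, producing a set $A''$ in which every vertex has outdegree $\le 2$ and only four have outdegree $1$. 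The punchline is a swap-back: if any two outdegree-$2$ vertices of $A''$ are adjacent, moving both to $B''$ produces a near-bisection with strictly smaller cut, contradicting minimality; otherwise the outdegree-$2$ vertices are independent, which together with the degree-$6$ constraint forces $|A''|\le 9$ and hence $n\le 13$. This swap-back contradiction against the minimal near-bisection is the missing idea, and it is what makes the threshold exactly $14$ rather than some unspecified bound from a case check.
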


\begin{proof}
We argue by contradiction and consider an $n$-vertex $6$-regular graph $G=(V,E)$ with no internal partition. Let $(A, B)$ be the near-bisection of $V$ that attains $\min|E(A,B)|$ over all near-bisections.
By Lemma \ref{l} either $A$ or $B$ must be 3-cohesive. We may assume $A$ is 3-cohesive while $B$ is not, for else $(A,B)$ is an internal partition.

We repeatedly carry out the following step:
As long as there is some $y \in B$ with outdegree $d_A(y)> 3$ we move that vertex from $B$ to $A$. If $A$ is 3-cohesive then clearly so is $A\cup \{y\}$, while if $B$ is 3-crumble, so is $B \backslash \{y\}$. By assumption no internal partition exists, so this process must terminate with a trivial partition, i.e., $B$ must be 3-crumble. The move of $y$ from $B$ to $A$ decreases $|E(A,B)|$ by $2d_A(y)-6 \geq 2$. Every step of the process therefore decreases the cut by at least 2, while $|B|$ decreases by 1. Also in the last two moves $|E(A,B)|$ decreases by $\ge 4$, and $6$ in this order, and at termination $E(A,B)=\emptyset$. We conclude that $|E(A,B)| \geq 2|B| + 6$.

On the other hand $|E(A,B)| \leq 2|A| + 4$:
By Lemma \ref{l} all vertices in $A$ have outdegree $\le 2$, except for at most 4 (that  are adjacent to a vertex in $B$ with outdegree $\le 4$) vertices with outdegree 3. Therefore $2|A| + 4 \ge |E(A,B)| \geq 2|B| + 6$ so that $|A| \ge |B|+1$.
It follows that $|A| = |B| + 1$, $n$ is odd and $B$ is a ``tight'' 3-crumble. Namely, exactly 4 vertices in $A$ have outdegree 3, and in all moves (except the last two) $|E(A,B)|$ is reduced by exactly 2. If $n \geq 9$ then $|B| \geq 4$, so the first two vertex moves are of outdegree 4. Let $y', y'' \in B$ be these first two vertices, let $(A',B') = (A \cup \{y'\},B \backslash \{y'\})$ be the partition after the first move, and let $(A'',B'') = (A \cup \{y',y''\},B \backslash \{y',y''\})$ be the partition after the second move. By the above $|E(A',B')| = |E(A,B)| - 2$ and $|E(A'',B'')| = |E(A,B)| - 4$.

By Lemma \ref{l} (\ref{l2c}) all vertices in $A'$ have outdegree 2. Therefore, in $A''$, all vertices have outdegree 2 except 4 with outdegree 1. Suppose that some pair of these outdegree-2 vertices in $A''$, say $x', x''$ are adjacent. Then it would be possible to move both vertices to $B''$ while increasing the cut size by only 3. Namely, $|E(A'' \backslash \{x',x''\},B'' \cup \{x',x''\})| = |E(A'',B'')|  + 3 < |E(A,B)|$.  This yields a near-bisection, that contradicts the minimality of $|E(A,B)|$. Alternatively, if the outdegree-2 vertices in $A''$ form an independent set, then all their neighbors in $A''$ must have outdegree 1 and indegree 5. It follows that there are at most 5 vertices in $A''$ of outdegree-2. Therefore $|A''| \leq 9 \Rightarrow |A| \leq 7 \Rightarrow n \leq 13$.
\qed
\end{proof}

\begin{remark}
We now comment on the range $n \le 13$. Note that the proof covers all even $n$. The complete graph $K_7$ is an exception with $n=7$.

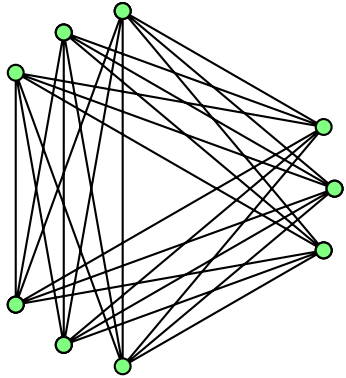
\begin{figure}[tbp]
\centering

\begin{tikzpicture}[thick,scale=0.6]
    \draw \foreach \x in {0,120,240}
    {
        (\x:4) node [green] {}  -- (\x+100:4)
        (\x-20:4) node [green] {} -- (\x+100:4)
        (\x+20:4) node [red] {} -- (\x+100:4)
        (\x:4) node [green] {}  -- (\x+120:4)
        (\x-20:4) node [green] {} -- (\x+120:4)
        (\x+20:4) node [red] {} -- (\x+120:4)
        (\x:4) node [green] {}  -- (\x+140:4)
        (\x-20:4) node [green] {} -- (\x+140:4)
        (\x+20:4) node [green] {} -- (\x+140:4)
    };

\end{tikzpicture}\quad

\caption[$K_{3,3,3}$: A 6-regular graph with no internal partition]{$K_{3,3,3}$: A 6-regular graph with no internal partition}
\label{d=n-3 example}
\end{figure}

For $n=9$, there is a unique unpartitionable 6-regular graph (see Figure \ref{d=n-3 example}). We prove this statement when we discuss the case $d = n -3$ in the following section.

For $n=11$, there exist 6-regular graphs with no internal partition. One such example, $Q_3$, is a member of a class of unpartitionable graphs we construct in Section \ref{general_case}.

The case $n=13$ remains unsettled. Our Conjecture \ref{conj2d} would imply that all such graphs have an internal partition.
\end{remark}

\section{Partitions of Complementary Graphs}

\begin{proposition}
\label{coexist}
For every $q \in (0,1)$, every graph $G$ has a $q$-external partition.
\end{proposition}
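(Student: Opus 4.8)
The plan is to use an extremal argument that generalizes the classical max-cut proof of the $q=\tfrac12$ case. For $q=\tfrac12$ the existence of an external partition follows by taking a partition maximizing $|E(A,B)|$: any vertex with a majority of its neighbors on its own side could be moved across to strictly increase the cut. For general $q$ the bare cut no longer works, because the two defining inequalities (the condition on $A$ and the condition on $B$) pull a relocated vertex in opposite directions. So I would introduce a weighted potential that absorbs this asymmetry and whose maximizers are exactly the $q$-external partitions.

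Concretely, I would consider
\[
\Phi(A,B) = |E(A,B)| + (1-2q)\sum_{v\in A} d_G(v),
\]
and take a partition $(A,B)$ maximizing $\Phi$ over all partitions of $V$; since there are finitely many partitions, a maximizer exists. The key computation is the effect of relocating a single vertex. Moving $x\in A$ to $B$ changes the cut by $d_A(x)-d_B(x)=d_G(x)-2d_B(x)$ and the weight term by $-(1-2q)d_G(x)$, giving $\Delta\Phi = 2\bigl(q\,d_G(x)-d_B(x)\bigr)$, whereas moving $y\in B$ to $A$ gives $\Delta\Phi = 2\bigl((1-q)d_G(y)-d_A(y)\bigr)$. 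Hence if some $x\in A$ violated $d_B(x)\ge q\,d_G(x)$, or some $y\in B$ violated $d_A(y)\ge(1-q)d_G(y)$, relocating it would strictly increase $\Phi$, contradicting maximality. So the maximizer has no violating vertices, which is precisely the definition of a $q$-external partition.

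The only real subtlety I anticipate is confirming that the maximizer is a genuine two-part partition rather than a trivial one. This is handled by the same relocation computation: if $B=\emptyset$, then any $x$ with $d_G(x)>0$ has $d_B(x)=0<q\,d_G(x)$ and is therefore a violator, so a trivial partition cannot maximize $\Phi$ whenever $G$ has at least one edge; the edgeless case is vacuously fine for any nonempty split. I would also sanity-check the boundary behavior at $q=\tfrac12$, where the weight term vanishes and the argument specializes exactly to the max-cut proof, and note that isolated vertices cause no trouble. No delicate estimates are required—the entire content is identifying the correct weight $(1-2q)$ that makes both relocation directions increase $\Phi$ simultaneously, after which the existence follows by taking an extremal partition.
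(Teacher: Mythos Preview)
Your proof is correct and follows essentially the same extremal argument as the paper: maximize a potential of the form $|E(A,B)|$ plus a linear degree-weight term, then note that any violating vertex could be relocated to strictly increase the potential, with non-triviality handled by moving a single non-isolated vertex out of a trivial side. Your potential $\Phi(A,B)=|E(A,B)|+(1-2q)\sum_{v\in A}d_G(v)$ and the paper's $w(A,B)=|E(A,B)|-(1-q)\sum_{x\in A}d_G(x)-q\sum_{x\in B}d_G(x)$ differ only by an additive constant and a relabeling $q\leftrightarrow 1-q$, so the arguments are the same in substance.
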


\begin{proof}
For a partition $(A,B)$ define
\begin{equation}
w(A,B) := |E(A,B)| - (1-q)\sum\limits_{x \in A} d_G(x) - q\sum\limits_{x \in B} d_G(x)
\end{equation}

The partition that maximizes $w(A,B)$ is non-trivial, since for every non-isolated vertex $x$ there holds $w(V \backslash \{x\},  \{x\}) > w(V,\emptyset)$ and $w(\{x\}, V \backslash \{x\}) > w(\emptyset,V)$. Furthermore $w(A,B) - w(A \backslash \{x\}, B \cup \{x\}) = d_B(x) - d_A(x) + (1-q) d_G(x) - qd_G(x) = 2d_B(x) - 2qd_G(x)$ and $w(A,B) - w(A \cup \{x\}, B \backslash \{x\}) = d_A(x) - d_B(x) -(1- q) d_G(x) + qd_G(x) = 2d_A(x) - 2(1-q)d_G(x)$,  so the maximality of $(A,B)$ implies that it is $q$-external.
\qed
\end{proof}

\begin{proposition}
\label{dual}
For $q \in (0,1)$ every exact $q$-internal partition of $G=(V,E)$ is an exact $(1-q)$-external partition of $\bar{G}$.
\end{proposition}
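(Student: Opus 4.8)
The plan is to push everything through the complement and check the two defining inequalities of a $(1-q)$-external partition by direct substitution. Writing $d_S^{G}(\cdot)$ and $d_S^{\bar G}(\cdot)$ for neighbor-counts in $G$ and in $\bar G$, the only inputs are three elementary identities: $d_{\bar G}(x) = n-1-d_G(x)$ for every $x$; and, across the cut, $d_B^{\bar G}(x) = |B| - d_B^G(x)$ for $x\in A$ and $d_A^{\bar G}(y) = |A| - d_A^G(y)$ for $y\in B$ (a $\bar G$-neighbor of $x\in A$ inside $B$ is exactly a non-neighbor in $G$). The within-part identity carries an extra $-1$, but only the across-part quantities enter the external condition, so I would not need it. Crucially, I would also invoke the exactness hypothesis $|A| = qn$, which fixes $|B| = (1-q)n$.

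First I would verify the condition on $A$: for $x\in A$ the $(1-q)$-external requirement in $\bar G$ is $d_B^{\bar G}(x) \geq (1-q)\,d_{\bar G}(x)$. Substituting the identities together with $|B|=(1-q)n$, and using $d_B^G(x) = d_G(x) - d_A^G(x)$, the difference $d_B^{\bar G}(x) - (1-q)\,d_{\bar G}(x)$ collapses to $(1-q) + d_A^G(x) - q\,d_G(x)$. The last two terms are nonnegative by exactly the $q$-internal condition $d_A^G(x)\geq q\,d_G(x)$, so the inequality holds (indeed strictly). Then I would run the mirror computation on $B$: for $y\in B$ the requirement $d_A^{\bar G}(y) \geq q\,d_{\bar G}(y)$ reduces in the same way to $q + d_B^G(y) - (1-q)\,d_G(y)\geq 0$, which follows from the $q$-internal condition $d_B^G(y)\geq(1-q)\,d_G(y)$ on $B$.

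The exactness of the image partition is then immediate, since $|A| = qn$ pins the two part sizes at $qn$ and $(1-q)n$, which is precisely what exactness of a $(1-q)$-external partition of $\bar G$ demands once the parts are matched to their roles. There is no deep obstacle here; the proof is a short computation. The one place to be careful is the size bookkeeping, and in particular the exact role of the exactness hypothesis: replaying the first computation with a generic $|B|=b$ gives $d_B^{\bar G}(x) - (1-q)\,d_{\bar G}(x) \geq b - (1-q)(n-1)$, so one needs $b\geq(1-q)(n-1)$, and exactness supplies $b=(1-q)n$, leaving the surplus $(1-q)$. Thus exactness is not cosmetic: it is exactly the ingredient that turns the bare internal inequalities into the external ones, and the same estimate shows near-exactness would already suffice.
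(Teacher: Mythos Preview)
Your proof is correct and follows essentially the same route as the paper's: both arguments substitute the complement identities $d_{\bar G}(x)=n-1-d_G(x)$ and $d_{\bar B}(x)=|B|-d_B(x)$, plug in $|A|=qn$, $|B|=(1-q)n$, and reduce each external inequality to the corresponding internal one plus a positive slack of $(1-q)$ or $q$. Your closing remark that near-exactness already suffices is a nice observation the paper does not make, though as stated it needs a little care (the bound you derive is $|B|\ge(1-q)(n-1)$, and near-exactness gives only $|B|>(1-q)n-1$, which falls short by $q$; you would need to invoke integrality of $|B|$ or strengthen the estimate slightly).
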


\begin{proof}
Let $|V|=n$ and let $(A,B)$ be an exact $q$-internal partition of $G$. Namely, $|A| = qn, |B|=(1-q)n$ and $\forall x \in A, d_A(x) \geq qd_G(x)$ and $\forall x \in B, d_B(x) \geq (1-q)d_G(x)$. To indicate that we work in $\bar{G}$ we denote by $\bar{A}, \bar{B}$ the subgraphs of $\bar{G}$ induced by $A, B$. Then:
\begin{align*}
&\forall x \in V,&d_{\bar{G}}(x) = n - d_G(x) - 1 \\
&\forall x \in A,&d_{\bar{B}}(x) = |B| - d_B(x) = (1-q)n - (d_G(x) - d_A(x)) \geq \\
&&\geq (1-q)(n - d_G(x)) > (1-q)d_{\bar{G}}(x) \\
&\forall x \in B,&d_{\bar{A}}(x) = |A| - d_A(x) = qn - (d_G(x) - d_B(x)) \geq \\
&& \geq q(n - d_G(x)) > qd_{\bar{G}}(x)
\end{align*}

So $(A,B)$ is a $(1-q)$-external partition.
\qed
\end{proof}

\begin{proposition}
For $q \in (0,1)$ every exact $(1-q)$-external partition of $G=(V,E)$  is an exact $q$-internal partition of $\bar{G}$, provided the partition of $\bar{G}$ is integral.
\end{proposition}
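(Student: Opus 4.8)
The plan is to read this as the exact converse of Proposition \ref{dual}: substituting $\bar G$ for $G$ in that proposition (and using $\bar{\bar G}=G$) shows that an exact $q$-internal partition of $\bar G$ is an exact $(1-q)$-external partition of $G$, and the present statement is the reverse implication. I therefore expect the proof to run through the very same complementation identities, now read backwards, with integrality supplying the rounding that the forward direction did not need. Concretely, I would take the partition $(A,B)$ with $|A|=qn$, $|B|=(1-q)n$ satisfying the $(1-q)$-external inequalities in $G$, namely $d_B(x)\ge(1-q)d_G(x)$ for $x\in A$ and $d_A(x)\ge qd_G(x)$ for $x\in B$, and try to verify the internal inequalities $d_{\bar A}(x)\ge qd_{\bar G}(x)$ for $x\in A$ and $d_{\bar B}(x)\ge(1-q)d_{\bar G}(x)$ for $x\in B$.

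The mechanical part uses the identities $d_{\bar G}(x)=n-1-d_G(x)$, $d_{\bar A}(x)=|A|-1-d_A(x)$ for $x\in A$, and $d_{\bar B}(x)=|B|-1-d_B(x)$ for $x\in B$. The external hypotheses rewrite as $d_A(x)\le qd_G(x)$ (for $x\in A$) and $d_B(x)\le(1-q)d_G(x)$ (for $x\in B$), so substituting into the first identity reduces the target $d_{\bar A}(x)\ge qd_{\bar G}(x)$ to the $d_G(x)$-free condition $|A|-1\ge q(n-1)$, and symmetrically the target for $x\in B$ reduces to $|B|-1\ge(1-q)(n-1)$.

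The catch — and the whole point of the integrality hypothesis — is that these reduced conditions fail over the reals: with $|A|=qn$ one gets $qn-1\ge qn-q$, i.e. $-1\ge -q$, which is false, the deficit being exactly $1-q$. To recover it I would use that both sides of each internal inequality are integers: $d_{\bar A}(x)$ trivially, and $qd_{\bar G}(x)$ by the assumed integrality of the partition of $\bar G$ (which also gives $(1-q)d_{\bar G}(x)=d_{\bar G}(x)-qd_{\bar G}(x)\in\mathbb Z$). Hence I may sharpen $d_A(x)\le qd_G(x)$ to $d_A(x)\le\lfloor qd_G(x)\rfloor$, and the gain is controlled by the fractional part $\{qd_G(x)\}$. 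The key observation is that exactness makes $qn$ an integer, and $qd_{\bar G}(x)=q(n-1)-qd_G(x)\in\mathbb Z$ then forces $qd_G(x)\equiv q(n-1)\equiv -q\pmod 1$, i.e. $\{qd_G(x)\}=1-q$; this is precisely the missing $1-q$, so the inequality holds (in fact with equality). The argument for $x\in B$ is identical with $q$ and $1-q$ interchanged, yielding $\{(1-q)d_G(x)\}=q$. I expect this fractional-part bookkeeping to be the only delicate step; once it is in place both internal inequalities hold, and since $|A|=qn$ the partition is exact $q$-internal in $\bar G$.
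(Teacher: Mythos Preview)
Your proposal is correct and follows essentially the same route as the paper: both use the complementation identities $d_{\bar G}(x)=n-1-d_G(x)$ and $d_{\bar A}(x)=|A|-1-d_A(x)$ together with the external hypothesis to obtain $d_{\bar A}(x)\ge qd_{\bar G}(x)-(1-q)$, then invoke integrality of both sides to close the gap. The paper does the last step in one line (two integers differing by more than $-(1-q)>-1$ must differ by at least $0$), whereas you take the equivalent but slightly longer detour of computing $\{qd_G(x)\}=1-q$ explicitly; the content is the same.
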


\begin{proof}
Maintaining the notation of Proposition \ref{dual}, consider an exact $(1-q)$-external partition $(A,B)$ of $G$. Namely $|A| = qn, |B|=(1-q)n$ and $\forall x \in B, d_A(x) \geq qd_G(x)$ and $\forall x \in A, d_B(x) \geq (1-q)d_G(x)$. Then:
\begin{align*}
&\forall x \in V,&d_{\bar{G}}(x) = n - d_G(x) - 1 \\
&\forall x \in A,&d_{\bar{A}}(x) = |A| - d_A(x) - 1 = qn - (d_G(x) - d_B(x)) - 1 \geq \\
&&\geq q(n - d_G(x)) - 1 = qd_{\bar{G}}(x) - (1-q).
\end{align*}
By rounding up we conclude that $d_{\bar{A}}(x) \geq qd_{\bar{G}}(x)$. (Note that $d_{\bar{A}}(x)$ and $qd_{\bar{G}}(x)$ are integers and $1>q>0$).
\begin{align*}
&\forall x \in B,&d_{\bar{B}}(x) = |B| - d_B(x) = (1-q)n - (d_G(x) - d_A(x)) - 1 \geq \\
&& \geq (1-q)(n - d_G(x)) - 1 = (1-q)d_{\bar{G}}(x) - q.
\end{align*}
By a similar argument $d_{\bar{B}}(x) \geq (1-q)d_{\bar{G}}(x)$,
so $(A,B)$ is a $q$-internal partition.
\qed
\end{proof}

\begin{corollary}
If $G$ has an internal bisection, then $\bar{G}$ has an external bisection.
\end{corollary}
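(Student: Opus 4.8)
The plan is to recognize that this corollary is nothing more than the $q=\tfrac{1}{2}$ specialization of Proposition \ref{dual}, once the terminology is unwound. So the first step is to translate the hypothesis into the language of exact $q$-internal partitions. By definition an internal partition is a $\tfrac{1}{2}$-internal partition, and a bisection is an exact partition at $q=\tfrac{1}{2}$ (since $|A|=|B|$ forces $|A|=\tfrac{n}{2}=\tfrac{1}{2}n$). Hence an internal bisection of $G$ is precisely an exact $\tfrac{1}{2}$-internal partition of $G$.

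The second step is to invoke Proposition \ref{dual} with $q=\tfrac{1}{2}$. It tells us that this exact $\tfrac{1}{2}$-internal partition $(A,B)$ of $G$ is an exact $(1-\tfrac{1}{2})=\tfrac{1}{2}$-external partition of $\bar{G}$. The final step is to read this conclusion back into ordinary language: the partition is still exact at $q=\tfrac{1}{2}$, so $|A|=\tfrac{n}{2}$ keeps it a bisection in $\bar G$, and a $\tfrac{1}{2}$-external partition is by definition external. Therefore $(A,B)$ is an external bisection of $\bar{G}$, as claimed.

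Honestly, there is no real obstacle here; the only thing to watch is bookkeeping of the definitions, namely making sure that ``exactness'' is preserved (it is, because $(A,B)$ is the very same partition on the same vertex set, so the cardinalities $|A|,|B|$ are unchanged when passing from $G$ to $\bar G$) and that the value of $q$ transforms as $q\mapsto 1-q$, which at $q=\tfrac12$ is a fixed point. One could alternatively prove the statement from scratch by repeating the degree count in the proof of Proposition \ref{dual} with $q=\tfrac12$, but that would merely duplicate work already done, so the clean route is simply to cite Proposition \ref{dual}.
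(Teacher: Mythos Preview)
Your proposal is correct and matches the paper's intent: the corollary is stated without proof precisely because it is the $q=\tfrac12$ instance of Proposition~\ref{dual}, and your unwinding of the definitions (internal bisection $=$ exact $\tfrac12$-internal partition, exact $\tfrac12$-external partition $=$ external bisection) is exactly what is needed.
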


\begin{corollary}
\label{dual-bisection}
If all degrees in $G$ are even and $\bar{G}$ has an external bisection, then $G$ has an internal bisection.
\end{corollary}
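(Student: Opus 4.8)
The plan is to obtain this as an immediate specialization of the Proposition immediately preceding the corollaries (the one converting exact $(1-q)$-external partitions into exact $q$-internal partitions), taking $q=\frac12$ and applying it to $\bar{G}$ rather than to $G$. First I would unwind the definitions from the Terminology section: an \emph{external bisection} is exactly an exact $\frac12$-external partition, and an \emph{internal bisection} is exactly an exact $\frac12$-internal partition. This reduces the corollary to a statement about exact $\frac12$-external and $\frac12$-internal partitions, where the preceding Proposition applies verbatim.

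Next I would apply that Proposition with its ``$G$'' instantiated as $\bar{G}$ and with $q=\frac12$. Since the complement of $\bar{G}$ is $G$ itself, the Proposition asserts that every exact $\frac12$-external partition of $\bar{G}$ is an exact $\frac12$-internal partition of $G$, provided the resulting partition of $G$ is integral. The assumed external bisection of $\bar{G}$ furnishes precisely the required exact $\frac12$-external partition of $\bar{G}$. Note that the bisection (exactness) property is automatic on passing between a graph and its complement: the condition $|A|=|B|$ depends only on the partition $(A,B)$ of the common vertex set $V$, not on the edge set, so a bisection of $\bar{G}$ is a bisection of $G$.

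The one point requiring care, and the step I would flag as the crux, is verifying the integrality hypothesis that the Proposition demands. At $q=\frac12$, integrality of the partition of $G$ means that $\frac12 d_G(v)\in\mathbb{Z}$ for every vertex $v$, i.e.\ that every degree of $G$ is even. This is exactly the standing hypothesis of the corollary, so the integrality requirement is satisfied for free and the Proposition applies without any rounding loss. Concluding, the external bisection of $\bar{G}$ is promoted to an exact $\frac12$-internal partition of $G$, which is an internal bisection, as claimed. The whole argument is thus a translation and a single substitution; there is no genuine obstacle beyond matching the integrality condition to the evenness hypothesis.
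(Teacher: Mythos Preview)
Your proposal is correct and is exactly the intended derivation: the corollary is stated in the paper without proof because it follows immediately from the preceding Proposition by taking $q=\tfrac12$, swapping the roles of $G$ and $\bar{G}$, and observing that the integrality hypothesis on $G$ at $q=\tfrac12$ is precisely the assumption that all degrees of $G$ are even.
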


\begin{theorem}
For even $n$, every ($n-2$)-regular graph has an internal bisection.
\end{theorem}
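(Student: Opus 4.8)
The plan is to deduce the statement from Corollary \ref{dual-bisection} by passing to the complement. First I would observe that if $G$ is $(n-2)$-regular on $n$ vertices, then every vertex has exactly $n-1-(n-2)=1$ non-neighbour, so $\bar{G}$ is $1$-regular. Since $n$ is even, $\bar{G}$ is therefore a perfect matching $M$ on $V$, consisting of $n/2$ disjoint edges covering every vertex.

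My next step is to exhibit an external bisection of $\bar{G}$, which in this case is essentially free. From each of the $n/2$ edges of $M$ I would place one endpoint in $A$ and the other in $B$. This gives $|A|=|B|=n/2$, so $(A,B)$ is a bisection, and every vertex $v$ has its unique $\bar{G}$-neighbour—its matching partner—on the opposite side. Hence for each $x\in A$ we have $d_{\bar{B}}(x)=1\ge \tfrac12 d_{\bar{G}}(x)$, and likewise $d_{\bar{A}}(x)=1\ge \tfrac12 d_{\bar{G}}(x)$ for each $x\in B$. Thus $(A,B)$ is an external partition of $\bar{G}$, and being balanced it is an external bisection.

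It then remains only to verify the parity hypothesis of Corollary \ref{dual-bisection}: every vertex of $G$ has degree $n-2$, which is even precisely because $n$ is even. With all degrees of $G$ even and $\bar{G}$ admitting an external bisection, the corollary immediately yields an internal bisection of $G=\bar{\bar{G}}$, completing the argument.

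I expect no genuine obstacle here. The only conceptual content is the observation that the complement of an $(n-2)$-regular graph is a perfect matching; once that is in hand, the external bisection of $\bar{G}$ is trivial and the evenness of $n$ supplies exactly the parity (equivalently, integrality) condition demanded by Corollary \ref{dual-bisection}. In this sense the theorem is a direct corollary of the complementation duality already established, with the hypothesis ``$n$ even'' playing a double role: guaranteeing both that $\bar{G}$ is a \emph{perfect} matching and that $\tfrac12 d_G(v)$ is an integer for every $v$.
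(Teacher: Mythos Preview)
Your argument is correct and follows exactly the paper's approach: pass to the complement, observe it is a perfect matching, split each matched pair to obtain an external bisection, and invoke Corollary~\ref{dual-bisection} using that $n-2$ is even. The only difference is that you spell out the verifications in more detail than the paper does.
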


\begin{proof}
The complement of an ($n-2$)-regular graph is a perfect matching. Split each matched pair between sides of a partition to obtain an external bisection. The theorem follows from Corollary \ref{dual-bisection}.
\qed
\end{proof}

\begin{theorem}
An ($n-3$)-regular graph $G$ has an internal partition if and only if its complementary graph $\bar G$ has at most one odd cycle. Furthermore this partition is a near-bisection.
\end{theorem}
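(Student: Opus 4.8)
The plan is to exploit the fact that $\bar G$ is $2$-regular, hence a disjoint union of cycles, and to translate the internal-partition condition on $G$ into a coloring condition on these cycles. The basic dictionary is the following non-neighbor count: since $d_G(x)=n-3$, each $x$ has exactly two non-neighbors, namely its two $\bar G$-neighbors. Thus for a partition $(A,B)$ with $|A|=a$, $|B|=b$ and $x\in A$ we have $d_A(x)=(a-1)-d_{\bar A}(x)$, so the internal requirement $d_A(x)\ge (n-3)/2$ becomes $d_{\bar A}(x)\le (a-b+1)/2$, and symmetrically $d_{\bar B}(x)\le (b-a+1)/2$ for $x\in B$. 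I would record these two inequalities first, as everything else follows from them.

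The first consequence, which yields the ``furthermore'' clause, is that any internal partition is a near-bisection: since $d_{\bar A}(x)\ge 0$ the inequality for $A$ forces $a\ge b-1$, and the one for $B$ forces $b\ge a-1$, hence $\bigl||A|-|B|\bigr|\le 1$. I would then split on the parity of $n$. For $n$ even a near-bisection is exact, $a=b=n/2$, and the bounds collapse to $d_{\bar A}(x)=d_{\bar B}(x)=0$; that is, $(A,B)$ must be a proper $2$-coloring of $\bar G$, which exists precisely when $\bar G$ is bipartite, i.e.\ has no odd cycle. For $n$ odd, say $a=b+1$, the bounds read $d_{\bar B}(x)=0$ for all $x\in B$ and $d_{\bar A}(x)\le 1$ for all $x\in A$; equivalently, reading each cycle of $\bar G$ cyclically, the coloring must avoid the patterns $BB$ (so $B$ is $\bar G$-independent) and $AAA$ (so no $A$-vertex keeps both $\bar G$-neighbors).

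The heart of the argument is then a per-cycle analysis for odd $n$. On a single cycle of length $\ell_i$ a legal coloring (no $BB$, no $AAA$) has its $B$'s isolated and its $A$-runs of length $1$ or $2$; writing $d_i=a_i-b_i$ for the imbalance contributed by cycle $i$, one checks that $d_i$ has the same parity as $\ell_i$ and that its minimum legal value is $0$ for an even cycle and $1$ for an odd cycle. Since the global constraint is $\sum_i d_i=a-b=1$ and each odd cycle forces $d_i\ge 1$, the number $o$ of odd cycles must satisfy $o\le 1$. Conversely, if $o\le1$ I would build the partition explicitly: alternate $ABAB\cdots$ on every even cycle (giving $d_i=0$) and, on the unique odd cycle if present, use the near-alternating pattern with one adjacent $AA$ (giving $d_i=1$); this is a valid internal near-bisection. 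A parity remark closes both cases uniformly: $o\equiv n\pmod 2$, so $o\le 1$ means $o=0$ when $n$ is even and $o=1$ when $n$ is odd, exactly matching the two constructions.

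I expect the only delicate point to be the per-cycle bookkeeping in the odd case---in particular verifying that the extremal imbalances are $0$ and $1$ and that the parity of $d_i$ is forced by $\ell_i$---rather than the reduction itself, which is routine once the non-neighbor dictionary is in place. One could alternatively try to route the even/odd dichotomy through the complementation corollaries proved above (an internal bisection of $G$ corresponds to an external bisection of $\bar G$), but the parity mismatch between ``$(n-3)$-regular'' and ``all degrees even'' makes the direct cycle count cleaner and self-contained.
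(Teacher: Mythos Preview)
Your argument is correct and follows the same route as the paper: pass to the $2$-regular complement, show that any internal partition is a near-bisection, and reduce the question to $2$-coloring the cycles of $\bar G$. The only difference is in the odd-$n$ necessity step, where you run a per-cycle imbalance count to force $o\le 1$; the paper instead uses a one-line global edge count ($B$ being $\bar G$-independent gives $|E_{\bar G}(A,B)|=2|B|$, hence $|E_{\bar G}(A)|=1$, so $\bar G$ is bipartite apart from a single edge and has exactly one odd cycle), which is shorter but equivalent.
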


\begin{proof}
Clearly $\bar{G}$ is 2-regular, i.e. it is comprised of vertex disjoint cycles. For every cycle, place the vertices alternately in $A$ and in $B$. If at most one cycle is odd, then $||A| - |B|| \leq 1$, so the partition is a near-bisection. It is also an internal partition of $G$, since the smaller side, say $B$, is a clique. Also, $A$ spans a clique if $|A| = |B|$ , or a clique minus one edge if $|A| = |B| + 1$, so its minimum indegree is also $|B| - 1$. As $|B| - 1 \geq (n-3)/2$, the partition is internal.

Let $G$ have an internal partition $(A,B)$. If $n$ is even, every vertex must have indegree $\geq n/2 - 1$. Therefore $|A| = |B| = n/2$ and the complementary graph $\bar{G}$ is bipartite so has no odd cycles. If $n$ is odd, assume $|A| > |B|$. $B$'s minimum indegree is $(n-3)/2$ so $|B| = (n-1)/2, |A| = (n+1)/2$ and the partition is a near-bisection. In $\bar{G}$, $|E(A,B)|=2|B|=n-1$ so $E(A)=(2|A|-|E(A,B)|)/2 = 1$. Therefore $(A,B)$ is bipartite in $\bar{G}$ except for a single edge internal to $A$. Therefore $\bar{G}$ has only one odd cycle.
\qed
\end{proof}

We can now confirm that $K_{3,3,3}$, the graph in Figure \ref{d=n-3 example}, has no internal partition, as it is the complement of three disjoint triangles. Furthermore, as there is no other way for a 9-vertex graph to have more than one odd cycle, this is the only $n=9, d=6$ graph with this property.

\section{The Case $d = n - 4$ and Cubic Graphs}

Let $G$ be a $d$-regular graph on $n$ vertices with $d = n - 4$. Clearly $n$ must be even, and its complement $\bar{G}$ is a cubic graph.

\begin{proposition}
\label{n-4}
If an ($n-4$)-regular graph $G$ has an internal partition then either
\begin{itemize}
\item $\bar{G}$ has an external bisection, or
\item $\bar{G}$ has an independent set of size at least $n/2-1$.
\end{itemize}
\end{proposition}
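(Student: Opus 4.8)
The plan is to exploit the size constraints that an internal partition forces, together with the complement duality of Proposition \ref{dual}. First I would record the basic arithmetic: since $G$ is $(n-4)$-regular, its complement $\bar G$ is cubic, and $n$ is even, so $\tfrac{n-4}{2} = \tfrac n2 - 2$ is an integer. Let $(A,B)$ be an internal partition of $G$. By definition every $x$ in a part $S \in \{A,B\}$ satisfies $d_S(x) \ge \tfrac12 d_G(x) = \tfrac{n-4}{2}$, while trivially $d_S(x) \le |S| - 1$. Hence $|S| \ge \tfrac{n-4}{2} + 1 = \tfrac n2 - 1$ for each part, and since $|A| + |B| = n$ the only possibilities are $|A| = |B| = \tfrac n2$ (a bisection) or, up to swapping names, $|A| = \tfrac n2 + 1$ and $|B| = \tfrac n2 - 1$.

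The bisection case is immediate: an internal partition with $|A| = |B|$ is an internal bisection, so the corollary to Proposition \ref{dual} (``If $G$ has an internal bisection, then $\bar G$ has an external bisection'') delivers the first alternative.

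For the remaining case I would argue that the smaller part $B$ must be a clique in $G$. Indeed, with $|B| = \tfrac n2 - 1$ every $x \in B$ satisfies $d_B(x) \ge \tfrac{n-4}{2} = |B| - 1$, forcing $d_B(x) = |B| - 1$; that is, $x$ is adjacent in $G$ to all the other $|B| - 1$ vertices of $B$. Thus $B$ induces a clique in $G$, equivalently an independent set in $\bar G$, of size $\tfrac n2 - 1$, which is the second alternative.

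I do not expect a genuine obstacle here; the content is essentially the counting bound $|S| \ge \tfrac n2 - 1$ and the observation that equality in the smaller part upgrades it to a clique. The only points requiring care are checking that $\tfrac{n-4}{2}$ is an integer, so that the degree inequalities are genuinely tight in the integer sense, and remembering to invoke the bisection corollary rather than re-deriving the external bisection of $\bar G$ by hand.
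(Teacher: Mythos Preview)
Your proof is correct and follows essentially the same approach as the paper: bound each part's size below by $n/2-1$ from the indegree condition, handle the bisection case via the corollary that an internal bisection of $G$ yields an external bisection of $\bar G$, and in the uneven case observe that the smaller part is forced to be a clique in $G$, hence independent in $\bar G$. Your write-up is in fact more explicit than the paper's, which cites Corollary~\ref{dual-bisection} (the converse direction) in its first sentence and leaves the exclusion of the $|A|=|B|$ case implicit.
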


\begin{proof}
By Corollary \ref{dual-bisection} if $\bar{G}$ has an external bisection, $G$ has an internal bisection. If not, to be internal a partition must have minimum degree $n/2-2$ so each part must have size $\ge n/2-1$. Therefore $|A| = |B|+2$, where $B$ is a clique in $G$ and an anticlique in $\bar{G}$.
\qed
\end{proof}
 
\begin{figure}[tbp]

\begin{center}
\begin{tikzpicture}[style=thick]
\draw (18:2cm) -- (90:2cm) -- (162:2cm) -- (234:2cm) --
(306:2cm) -- cycle;
\draw (18:1cm) -- (162:1cm) -- (306:1cm) -- (90:1cm) --
(234:1cm) -- cycle;
\foreach \x in {18,90,162,234,306}{
\draw (\x:1cm) -- (\x:2cm);
\draw (\x:2cm) [green] circle (3pt);
\draw (\x:1cm) [green] circle (3pt);
}
\draw (18:2cm) [orange] circle (3pt);
\draw (162:2cm) [orange] circle (3pt);
\draw (234:1cm) [orange] circle (3pt);
\draw (306:1cm) [orange] circle (3pt);
\end{tikzpicture}
\end{center}
\caption[External partition of the Petersen graph]{External partition of the Petersen graph}
\label{petersen}
\end{figure}
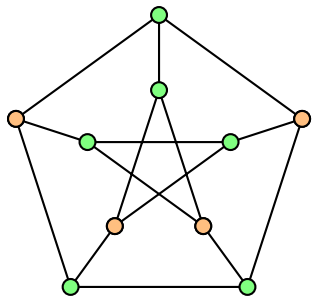

The Petersen graph (see Figure \ref{petersen}) has no external bisection, but it has an independent set of size 4. Its complement is 6-regular, and in fact has an internal partition (but not a bisection), as already proved in Theorem \ref{6regular}.

The requirement of an independent set of size $n/2-1$ means that, save for 3 edges, the cubic graph is bipartite. Clearly this is a rare phenomenon among cubic graphs, so our quest for graphs with internal partitions boils down to asking which cubic graphs have an external bisection.

We show next:

\begin{theorem}
\label{class-1}
Every class-1 3- or 4-regular graph $G$ has an external bisection.
\end{theorem}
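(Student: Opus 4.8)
The plan is to exploit the class-1 hypothesis directly. A $d$-regular class-1 graph has chromatic index exactly $d$, so a proper $d$-edge-coloring partitions $E$ into $d$ color classes, each of which must be a \emph{perfect} matching: since every vertex meets $d$ edges of $d$ distinct colors, each color occurs exactly once at each vertex. I would fix such a coloring and pick any two of these matchings, say $M_1$ and $M_2$.

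The key structural observation is that $M_1 \cup M_2$ is a spanning $2$-regular subgraph, hence a disjoint union of cycles, and along each cycle the edges alternate between $M_1$ and $M_2$ (two edges of the same matching cannot share a vertex); therefore every cycle has even length. I would then color the vertices of each such even cycle alternately, assigning consecutive vertices to $A$ and $B$. Because each cycle is even, this places exactly half of its vertices on each side, so the resulting partition $(A,B)$ has $|A| = |B| = n/2$, i.e. it is a bisection, and moreover every edge of $M_1 \cup M_2$ crosses between $A$ and $B$.

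It remains to verify the external condition. Each vertex is incident to one edge of $M_1$ and one edge of $M_2$, and by construction both of these cross the partition; hence every vertex has outdegree at least $2$. For $d = 3$ the external requirement is outdegree $\geq \lceil 3/2 \rceil = 2$, and for $d = 4$ it is outdegree $\geq 2$. In both cases the bound is met (the remaining matching edges, which may be internal, contribute indegree at most $1$ when $d=3$ and at most $2$ when $d=4$), so $(A,B)$ is an external bisection.

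I do not expect a genuine obstacle here, since the argument is short; the point worth flagging is precisely \emph{why} it is confined to $d \in \{3,4\}$. Forcing only two of the $d$ matchings to cross guarantees outdegree $2$ at every vertex, which suffices exactly because $\lceil d/2 \rceil = 2$ for these two values. For larger even $d$ one would need at least $d/2$ crossing edges per vertex while still maintaining the exact balance $|A| = |B|$, and pairing up and alternating several matchings simultaneously need not keep the two parts equal, so the same trick does not extend. The only care needed in writing up the proof is the parity claim for the cycles of $M_1 \cup M_2$ and the bookkeeping that alternating along even cycles yields an exact bisection.
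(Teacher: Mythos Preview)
Your proof is correct and follows essentially the same approach as the paper: select two color classes from a proper $d$-edge-coloring, observe that their union is a $2$-factor of even cycles, and alternate vertices along each cycle to obtain a bisection in which every vertex has at least two neighbors on the opposite side. Your write-up is in fact more explicit than the paper's about why the cycles are even and why the argument stops working beyond $d=4$.
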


\begin{proof}
Pick some $d$-edge coloring of $G$, and choose any two of the colors. The corresponding alternating cycles form a $2$-factor in $G$ of even cycles. Number the vertices of each of these cycles sequentially along the cycle path. Alternately assign the vertices in the cycles to the two sides of a partition which is clearly a bisection. For $d \leq 4$, this partition is external, since every vertex has at least two neighbors at the opposite part.
\qed
\end{proof}

While all class-1 cubic graphs have an external bisection, the same question for class-2 cubic graphs remains open, though below we present a partial result. As noted, the Petersen graph, the smallest {\em snark}, has no external bisection. We checked a substantial number of larger snarks and found external bisections in all of them. Our computer experiments also suggest that all cubic graphs with bridges have external bisections, so we make the conjecture:

\begin{conjecture}
\label{cubic}
The Petersen graph is the only connected cubic graph that has no external bisection.
\end{conjecture}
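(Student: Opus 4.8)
The final statement is a \emph{conjecture}, so the aim of the plan is to reduce it to a small, well-understood family and then attack that core. First I would invoke Vizing's theorem: a cubic graph is either class-1 or class-2, and Theorem~\ref{class-1} already disposes of every class-1 graph. So a connected counterexample must be class-2. I would next peel off the graphs with bridges and prove separately that every connected cubic graph with a bridge has an external bisection; the plan there is to cut along a bridge $e=xy$, build suitable (near-)external bisections of the two sides, and reattach while controlling the colours of $x,y$ and the two part-sizes. The balancing across pieces of unequal order, together with the possible absence of a perfect matching in bridged cubic graphs, makes this bookkeeping delicate, but such graphs carry enough slack (low edge-connectivity) that no single rigid obstruction like the Petersen graph should survive. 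This leaves the essential case: bridgeless class-2 cubic graphs, i.e.\ (essentially) snarks, with the Petersen graph as the conjectured unique exception.

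For a bridgeless cubic graph I would exploit Petersen's theorem, which yields a perfect matching $M$ whose removal leaves a $2$-factor $F$, a disjoint union of cycles. The key reformulation is that an external bisection of a cubic graph is exactly a balanced $2$-colouring in which each colour class induces maximum degree $\le 1$; equivalently, every vertex has at most one same-coloured neighbour. If I colour each even cycle of $F$ alternately, every such vertex gets both its $F$-neighbours across the cut and is satisfied regardless of its $M$-edge. The trouble is confined to the odd cycles: an alternating colouring of an odd cycle must leave exactly one monochromatic edge, whose two adjacent, equally-coloured endpoints I call \emph{deficient}. A deficient vertex has only one $F$-neighbour across the cut, so it is satisfied if and only if its matching edge in $M$ crosses the cut.

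Two things must then be arranged at once. Balance is the easy part: a cubic graph has even order, hence an even number of odd cycles in $F$, and flipping an odd cycle moves its surplus vertex to either side, so the $\pm 1$ imbalances cancel in pairs. The real content is \emph{satisfying the deficient vertices}: for each odd cycle I may freely choose the position of its monochromatic edge (which adjacent pair becomes deficient) and the global flip of the cycle, and I must use this freedom so that every deficient vertex has its $M$-partner on the opposite side. I would cast this as a feasibility problem on an auxiliary graph whose nodes are the cycles of $F$ and whose constraints come from the matching edges joining potential deficient vertices, and try to solve it by an augmenting/switching argument: when a configuration leaves a deficient vertex unsatisfied, re-route the defect along its matching edge, re-flip a cycle, or, failing that, pass to a different $2$-factor.

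I expect this last step to be the main obstacle, and it is precisely where the Petersen graph resists. In the Petersen graph every $2$-factor consists of two disjoint $5$-cycles joined by the matching $M$ of ``spokes'', so there are four deficient vertices, and one checks directly that no choice of defect positions and cycle flips sends all four spokes across the cut. The essence of the conjecture is that this rigidity is peculiar to the Petersen graph: any larger or less symmetric snark should have enough cycles, long enough cycles, or enough alternative $2$-factors to dissolve every conflict. Proving such a dichotomy in full generality is exactly what keeps the statement a conjecture; a realistic program would first settle it for reduced/irreducible snarks and for snarks of small oddness, where the auxiliary feasibility problem carries only a few constraints.
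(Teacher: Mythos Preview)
This statement is a \emph{conjecture} in the paper; no proof is given there, and you correctly present a programme rather than a proof. The paper's own contribution toward the conjecture consists of (i) Theorem~\ref{class-1} disposing of class-1 cubic graphs, (ii) a bridge-elimination procedure that reduces a bridged cubic graph to two smaller cubic graphs $G_1,G_2$ and shows that if \emph{both} are class-1 then the original graph has an external bisection, and (iii) computer experiments on snarks. The paper explicitly says that even the natural first step---proving a minimal counterexample must be bridgeless---is not established, because the bridge construction breaks down when either $G_1$ or $G_2$ is class-2.

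Your outline coincides with the paper on the class-1 reduction and on isolating bridges, but is more articulated in the bridgeless class-2 core. The reformulation via Petersen's theorem---take a perfect matching $M$ and a $2$-factor $F$, two-colour even cycles of $F$ alternately, and arrange the ``deficient'' pair on each odd cycle so that their $M$-edges cross the cut while keeping the global count balanced---is a concrete combinatorial framework that the paper does not set up; your explanation of why the Petersen graph resists (every $2$-factor is two $5$-cycles, and no placement of defects routes all the relevant spokes across) is also sharper than anything in the paper. Conversely, your treatment of bridges is vaguer than the paper's explicit decomposition, which already secures the case where both terminal pieces are class-1. In both accounts the hard step---turning the odd-cycle feasibility/switching heuristic into a proof valid for every snark other than the Petersen graph---is left open, and you rightly flag it as the essential obstacle.
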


Note that disconnected cubic graphs with no external bisection do exist. For example, a graph that has an odd number of components that are Petersen graphs and any number of $K_4$ components.

\begin{figure}[tbp]

\begin{minipage}[t]{0.5\textwidth}
\begin{tikzpicture}[style=thick]
\foreach \pos/\name in  {{(18:2cm)/a}, {(90:2cm)/b},  {(162:2cm)/c}, {(234:2cm)/d}, {(306:2cm)/e}}
        \node[green] (\name) at \pos {};
\foreach \pos/\name in  {{(18:1cm)/f}, {(90:1cm)/g},  {(162:1cm)/h}, {(234:1cm)/i}, {(306:1cm)/j}}
        \node[green] (\name) at \pos {};
\draw (a) -- (b) -- (c) -- (d) -- (e) -- (a);
\draw (f) -- (h) -- (j) -- (g) -- (i) -- (f);
\draw (a) -- (f);
\draw (b) -- (g);
\draw (c) -- (h);
\draw (d) -- (i);
\draw (e) -- (j);

\foreach \pos/\name in  {{(-1,-4)/k}, {(-2,-3)/l},  {(2,-3)/m}, {(1,-4)/n}}
        \node[green] (\name) at \pos {};
\draw (k) -- (l) -- (m) -- (n) -- (k);
\draw (k) -- (m);
\draw (l) -- (n);

\end{tikzpicture}\quad

\end{minipage}
\begin{minipage}[t]{0.5\textwidth}

\begin{tikzpicture}[style=thick]
\foreach \pos/\name in  {{(18:2cm)/a}, {(90:2cm)/b},  {(162:2cm)/c}, {(234:2cm)/d}, {(306:2cm)/e}}
        \node[green] (\name) at \pos {};
\foreach \pos/\name in  {{(18:1cm)/f}, {(90:1cm)/g},  {(162:1cm)/h}, {(234:1cm)/i}, {(306:1cm)/j}}
        \node[green] (\name) at \pos {};
\draw (a) -- (c);
\draw (a) -- (d);
\draw (a) -- (g);
\draw (a) -- (h);
\draw (a) -- (i);
\draw (a) -- (j);

\draw (b) -- (d);
\draw (b) -- (e);
\draw (b) -- (f);
\draw (b) -- (h);
\draw (b) -- (i);
\draw (b) -- (j);

\draw (c) -- (e);
\draw (c) -- (f);
\draw (c) -- (g);
\draw (c) -- (i);
\draw (c) -- (j);

\draw (d) -- (f);
\draw (d) -- (g);
\draw (d) -- (h);
\draw (d) -- (j);

\draw (e) -- (f);
\draw (e) -- (g);
\draw (e) -- (h);
\draw (e) -- (i);

\foreach \pos/\name in  {{(-1,-4)/k}, {(-2,-3)/l},  {(2,-3)/m}, {(1,-4)/n}}
        \node[green] (\name) at \pos {};

\foreach \a in {k, l, m, n}
	\foreach \b in {a, b, c, d, e, f, g, h, i, j}
		\draw (\a) -- (\b);

\end{tikzpicture}

\end{minipage}

\caption[Smallest d=n-4 regular graph with no internal partition (right) is complement of cubic graph on left]{Smallest d=n-4 regular graph with no internal partition (right) is complement of cubic graph on left}
\label{petersen-k4}
\end{figure}
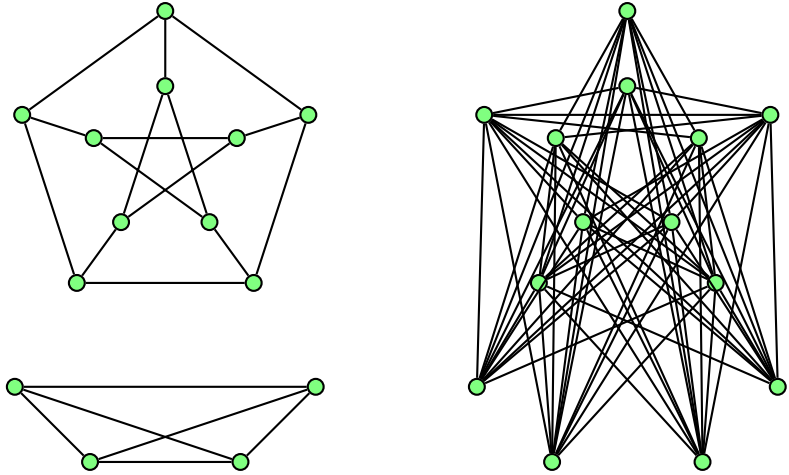
As mentioned above, the complement of the Petersen graph has an internal partition, by virtue of having an anticlique of size $n/2-1$ (as required by Proposition \ref{n-4}). But the above-mentioned disconnected cubic graphs do not meet that requirement and so their complements have no internal partition. The smallest of these is a 10-regular graph of order 14, whose complement is a Petersen graph plus a $K_4$ component (see Figure \ref{petersen-k4}). This is the smallest of an infinite class of $d=(n-4)$-regular graphs with no internal partition. If Conjecture \ref{cubic} is true, these are the only exceptions, as stated in the following:

\begin{conjecture}\label{cnj2}
If $G$ is $(n-4)$-regular and has no internal partition, then $\bar G$ is a disconnected cubic graph that has an odd number of components that are Petersen graphs. All other components of $\bar G$ have the property that all their external partitions are bisections.
\end{conjecture}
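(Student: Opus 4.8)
The plan is to pass to the complement and recast everything as a question about external bisections of the cubic graph $\bar G$, then to analyse $\bar G$ one connected component at a time, leaning on Conjecture~\ref{cubic}. First I would record the correspondence supplied by the complement results. Since $G$ is $(n-4)$-regular, $n$ is even and every degree of $G$ is even, so Corollary~\ref{dual-bisection} applies: if $\bar G$ has an external bisection then $G$ has an internal bisection. Taking the contrapositive, the hypothesis that $G$ has \emph{no} internal partition already forces $\bar G$ to have \emph{no} external bisection. Thus the entire theorem reduces to classifying the cubic graphs $\bar G$ that admit no external bisection.

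For that classification I would use the fact that in a cubic graph an external partition $(A,B)$ is exactly a partition in which each side induces a subgraph of maximum degree $\le 1$ (a partial matching), since each vertex must send at least two of its three edges across the cut. A short double-counting argument on the cut then pins any external partition to the range $|A|\in[\tfrac{2n}{5},\tfrac{3n}{5}]$, so for the Petersen graph $|A|\in\{4,5,6\}$; the value $5$ is impossible because the Petersen graph has no external bisection, leaving exactly the sizes $4$ and $6$, i.e. an \emph{imbalance} of $\pm1$. Because external partitions of a disjoint union are obtained by externally partitioning each component independently, $\bar G$ has an external bisection iff one can choose, component by component, achievable part-sizes whose total imbalance (the signed deviation of $|A\cap H|$ from $|H|/2$) is $0$. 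By Conjecture~\ref{cubic} every connected cubic component other than the Petersen graph has an external bisection, so imbalance $0$ is always available there.

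Let $p$ be the number of Petersen components. If $p$ is even I would pair the Petersen components, placing one of each pair at size $4$ and the other at size $6$ and bisecting every remaining component; the total imbalance is $0$, giving an external bisection, contrary to hypothesis. Hence $p$ is odd, so the Petersen components contribute an odd total imbalance that can only be cancelled by a non-Petersen component supplying an \emph{odd} imbalance. Consequently, no external bisection forces every non-Petersen component to admit no odd-imbalance external partition. To convert this into the stated conclusion I need the key lemma that a connected cubic graph other than the Petersen graph whose external partitions are \emph{not} all bisections in fact admits an external partition of imbalance $\pm1$ (equivalently, its set of external-partition sizes is an interval of consecutive integers). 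Granting this, ``no odd-imbalance external partition'' coincides with the stated property that all external partitions of that component are bisections. Finally, $\bar G$ must be disconnected: a connected $\bar G$ with no external bisection would be the Petersen graph by Conjecture~\ref{cubic}, but then $G=\overline{\text{Petersen}}$ has an internal partition (established earlier from its independent set of size $n/2-1$), a contradiction; together with the oddness of $p$ this yields the full structure.

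The dominant obstacle is that Conjecture~\ref{cubic} is itself open, so the whole argument is necessarily conditional on it; without it one cannot even assert that non-Petersen components have external bisections. The genuinely new combinatorial difficulty is the key lemma of the previous paragraph: ruling out a connected cubic graph that has an unbalanced external partition yet none of imbalance $\pm1$ (an ``even-imbalance-only'' graph) is what makes the parity cancellation exact, and I expect it to be comparable in depth to Conjecture~\ref{cubic}, with the Petersen graph once again the sole exception to interval behaviour. I would also note, as a consistency check rather than part of the proof, that the stated structure is necessary but not sufficient --- for instance the disjoint union of a Petersen graph and a $K_{3,3}$ has no external bisection yet its complement does have an internal partition because of a large independent set --- which is exactly why the statement is phrased as a one-directional implication.
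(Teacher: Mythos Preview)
This statement is a \emph{conjecture} in the paper and carries no proof there; the authors only remark that ``if Conjecture~\ref{cubic} is true, these are the only exceptions'' and then state Conjecture~\ref{cnj2}. So there is no proof in the paper to compare your attempt against.

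Your conditional analysis in fact goes further than the paper. The reduction via Corollary~\ref{dual-bisection} (no internal partition in $G$ $\Rightarrow$ no external bisection in $\bar G$), the component-by-component imbalance bookkeeping, the computation that a Petersen component forces imbalance exactly $\pm 1$, and the disconnectedness argument are all correct. More to the point, you have correctly isolated something the paper glosses over: even granting Conjecture~\ref{cubic}, one still needs the additional ``interval'' lemma that every connected cubic graph other than the Petersen graph which admits an unbalanced external partition already admits one of imbalance $\pm 1$. Without it a non-Petersen component could in principle offer only even (or only large odd) nonzero imbalances, and the parity cancellation with the Petersen components would fail; so the paper's informal suggestion that Conjecture~\ref{cnj2} follows from Conjecture~\ref{cubic} alone is, as you flag, incomplete. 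Your closing consistency check (Petersen${}+K_{3,3}$ satisfies the structural conclusion yet its complement has an internal partition via the dominating independent set of size $n/2-1$, cf.\ Proposition~\ref{n-4}) is also correct and usefully shows the implication cannot be reversed.
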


Another consequence of Conjecture~\ref{cubic} is:

\begin{conjecture}\label{cnj3}
Every cubic graph has an external partition $(A,B)$ with $||A| - |B|| \leq 2$.
\end{conjecture}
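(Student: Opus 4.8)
The plan is to \emph{deduce this from Conjecture~\ref{cubic}}, exactly as the phrase ``another consequence'' signals: assuming the Petersen graph is the only connected cubic graph with no external bisection, I would build an external partition of an arbitrary cubic graph whose two parts differ in size by at most $2$. The key observation is that being an external partition is a purely local condition --- each vertex constrains only its own neighbors, and all of them lie in its own connected component --- so external partitions of the components of $G$ glue together into an external partition of $G$, and the total imbalance $|A|-|B|$ is simply the sum of the imbalances contributed by the components.

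First I would decompose $G$ into its connected components $C_1,\dots,C_t$. For every component $C_i$ that is \emph{not} the Petersen graph, Conjecture~\ref{cubic} provides an external \emph{bisection}, which contributes $0$ to the total imbalance; in particular this covers $K_4$ and $K_{3,3}$, both of which do admit external bisections. Each component that \emph{is} a Petersen graph contributes a fixed amount instead: it has no external bisection, but it does have an external partition into parts of sizes $6$ and $4$ --- the partition of Figure~\ref{petersen}, in which each color class leaves every vertex with at least two neighbors on the opposite side. Such a component therefore contributes $\pm 2$ to $|A|-|B|$.

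The only remaining point is to balance the Petersen contributions. Since interchanging the two sides of an external partition again yields an external partition, for each Petersen component I am free to decide whether it contributes $+2$ or $-2$. If $k$ of the components are Petersen graphs, I would choose signs $s_i\in\{+1,-1\}$ so that $|A|-|B| = 2\sum_i s_i$ has least absolute value; that minimum is $0$ when $k$ is even and $2$ when $k$ is odd. In either case $||A|-|B||\le 2$, and both parts are nonempty because every component is itself split into two nonempty pieces, which finishes the conditional proof.

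The honest \textbf{main obstacle} is that this argument is only as strong as Conjecture~\ref{cubic} itself, which is open; the combinatorial reduction above is routine, but I do not see how to control the imbalance of an external partition of a single connected cubic graph without it. I would also stress that the bound $2$ is \emph{forced} and cannot be tightened to the near-bisection bound $1$: the Petersen graph admits no external partition more balanced than $6$--$4$, so a single Petersen component already realizes $||A|-|B||=2$.
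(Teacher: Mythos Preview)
Your proposal is correct and matches the paper exactly: the paper does not give a proof of this statement at all, but simply presents it as ``another consequence of Conjecture~\ref{cubic}'', and you have correctly spelled out the routine component-by-component argument that makes this implication precise. Your remarks that the reduction is only conditional and that the bound $2$ is tight (witnessed by a single Petersen component) are also in line with the paper's discussion.
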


\begin{figure}[tbp]

\centering
\begin{tikzpicture}[style=thick]
\foreach \pos/\name in {{(-4,0)/1}, {(-1,0)/3}, {(1,0)/5}, {(3,0)/7}, {(4,3)/9}, {(2,3)/10}, {(0,3)/12},
			       {(-2,4)/14}, {(-3,1.5)/17}, {(-3,3.5)/19}, {(-1,2)/20}, {(-2,1)/25}, {(0,1)/22}, {(2,1)/27}}
        \node[green] (\name) at \pos {};
\foreach \pos/\name in {{(-2,0)/2}, {(0,0)/4}, {(2,0)/6}, {(4,1)/8}, {(1,3)/11}, {(-1,3)/13}, {(-4,4)/15},
			       {(-3,0.5)/16}, {(-3,2.5)/18}, {(0,2)/21}, {(-2,2)/24}, {(-1,1)/23}, {(1,1)/26}, {(3,2)/28}}
        \node[orange] (\name) at \pos {};

\draw (1) -- (2) -- (3) -- (4) -- (5) -- (6) -- (7) -- (8) -- (9) -- (10) -- (11) -- (12) -- (13) -- (14) -- (15) -- (1);
\draw (1) -- (16) -- (17) -- (18) -- (19) -- (20) -- (21) -- (22) -- (23) -- (24) -- (17);
\draw (5) -- (26) -- (11);
\draw (26) -- (27) -- (8);
\draw (7) -- (28) -- (10);
\draw (28) -- (9);
\draw (6) -- (27);
\draw (2) -- (25) -- (16);
\draw (25) -- (18);
\draw (3) -- (23);
\draw (4) -- (22);
\draw (15) -- (19);
\draw (14) -- (24);
\draw (13) -- (20);
\draw (12) -- (21);
\end{tikzpicture}

\caption[Possibly largest ($n=28$) connected cubic graph with no uneven external partition]{Possibly largest ($n=28$) connected cubic graph with no uneven external partition}
\label{all-bisection}
\end{figure}
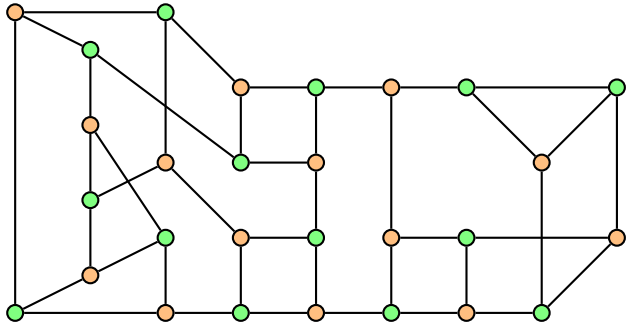

There exist graphs other than $K_4$ all of whose external partitions are bisections. Every cubic graph of order 6 or 8 has this property, since an uneven external partition has at most a $3:2$ proportion of the sides. There are, however, larger connected cubic graphs with this property. The graph in Figure \ref{all-bisection} has order 28 and it may be the largest such graph.

An obvious first step in proving Conjecture~\ref{cubic} would be to show that the smallest counterexample to this conjecture must be bridgeless. We are presently unable to establish even that, but following is a partial result in that direction:

Every bridge in a cubic graph $G = (V,E)$ may be eliminated, resulting in two smaller cubic graphs by the following procedure. The reader may find it useful to follow Figure \ref{bridge} where this procedure is illustrated.

Start by deleting the two vertices of the bridge ($b_1, b_2$). In each of the two components all vertices then have degree $3$, except for two vertices of degree $2$. The following is repeated in a loop for each component until a cubic graph remains:

\begin{itemize}
\item
If the two degree-2 vertices are not adjacent, add an edge between them. This yields a cubic graph, and the procedure is terminated. Otherwise remove them both. The continuation depends on whether the two vertices share a neighbor:
\item
If the removed degree-2 vertices had a common neighbor (such as $p_1, p_2$ and their common neighbor $p_3$), delete that neighbor and  its remaining neighbor (in the example: $p_4$). There remain exactly two vertices of degree 2 ($x_1, y_1$), and the loop is repeated.
\item
Otherwise (as in $q_1, q_2$) their additional neighbors ($q_3, q_4$) are distinct. Again, exactly two vertices with degree 2 remain, and the loop is repeated.
\end{itemize}
The terminal components $G_1 = (V_1,E_1), G_2 = (V_2,E_2)$ are nonempty and cubic, since during the run of the procedure the component always has two vertices of degree 2. 
They each contain a single edge that is not in $E$, namely $x_1 y_1 \in E_1 , x_2 y_2 \in E_2$.

\begin{figure}[tbp]

\centering
\begin{tikzpicture}[style=thick]
\foreach \pos/\name in {{(-5,0)/x_1}, {(-5,2)/y_1}, {(4,0)/x_2}, {(4,2)/y_2}}
        \node[gray] (\name) at \pos {$\name$};
\foreach \pos/\name in {{(-1,1)/b_1}, {(-4,1)/p_4}, {(-2,0)/p_2}, {(2,2)/q_1}, {(3,0)/q_4}}
        \node[green] (\name) at \pos {$\name$};
\foreach \pos/\name in {{(-3,1)/p_3}, {(-2,2)/p_1}, {(1,1)/b_2}, {(2,0)/q_2}, {(3,2)/q_3}}
        \node[orange] (\name) at \pos {$\name$};

\draw (0,1.2) node {$bridge$};
\draw (x_1) -- (p_4) -- (p_3) -- (p_2) -- (b_1) -- (b_2) -- (q_2) -- (q_4) -- (x_2);
\draw (y_1) -- (p_4);
\draw (b_2) -- (q_1) -- (q_3) -- (y_2);
\draw (p_3) -- (p_1) -- (b_1);
\draw (p_1) -- (p_2);
\draw (q_1) -- (q_2);
\draw (q_3) -- (q_4);
\draw [dashed] (x_1) -- (y_1);
\draw [dashed] (x_2) -- (y_2);
\draw (-6,0) -- (x_1);
\draw (-6,-0.5) -- (x_1);
\draw (-6,2) -- (y_1);
\draw (-6,2.5) -- (y_1);
\draw (5,0) -- (x_2);
\draw (5,-0.5) -- (x_2);
\draw (5,2) -- (y_2);
\draw (5,2.5) -- (y_2);

\draw (-6,3.5) node {$G_1$};
\draw [dotted] (-5.5,-1.5) arc (-40:40:4);
\draw (5,3.5) node {$G_2$};
\draw [dotted] (4.5,3.5) arc (140:220:4);
\end{tikzpicture}

\caption[Cubic graph bridge decomposition]{Cubic graph bridge decomposition}
\label{bridge}
\end{figure}
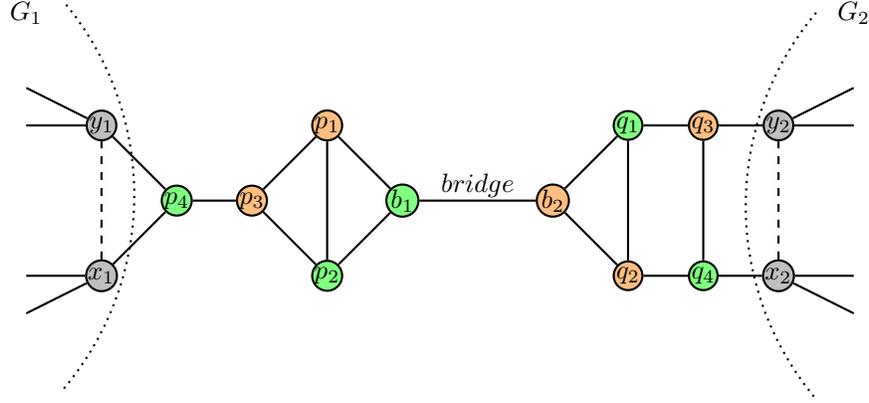

We now note that if $G_1$ and $G_2$ are both class-1, then $G$ has an external bisection, constructed as follows: Bisect the vertices in $V_1$ as in the proof of Theorem \ref{class-1}, taking care to choose the two colors other than $x_1 y_1$'s color. This creates an external bisection of $G_1$ in which $x_1 y_1$ may be removed and replaced by other edges without disturbing the fact that the partition is external. Similarly derive an external bisection of $G_2$, using two colors other than $x_2 y_2$'s color. Finally assign the bridge vertices to different sides of the partition, and do the same with any non-bridge vertex pair that was deleted to obtain $G_1$ and $G_2$. The result is an external bisection of $G$.

Much remains to be done here, since this construction does not work if either $G_1$ or $G_2$ are class-2. It may fail because the graph at hand is a snark that has no 3-edge-coloring, but also if it contains a bridge, due to the requirement pertaining to the color of the non-$E$ edge. If there is more than one such edge, it is not necessarily the case that we can simultaneously satisfy more than one such requirement. 

\section{The General Case}
\label{general_case}

The existence of internal partitions for $d$-regular graphs with $d=5$ and with $7 \leq d \leq n-5$ remains unsettled, as is the existence of $q$-internal partitions for $q \neq \frac{1}{2}$.

\begin{figure}[tbp]

\begin{minipage}[t]{0.5\textwidth}
\begin{tikzpicture}[style=thick]
\foreach \pos/\name in  {{(18:1.5cm)/a}, {(90:1.5cm)/b},  {(162:1.5cm)/c}, {(234:1.5cm)/d}, {(306:1.5cm)/e}}
        \node[green] (\name) at \pos {};
\draw (a) -- (b) -- (c) -- (d) -- (e) -- (a);
\draw [dotted] (0,0) ellipse (1.7cm and 1.7cm);
\draw (-0.8,1.2) node {$X_2$};

\foreach \pos/\name in  {{(-1,-4)/n}, {(0,-2.5)/l}, {(1,-4)/m}}
        \node[green] (\name) at \pos {};
\draw (n) -- (l) -- (m) -- (n);
\draw [dotted] (0,-3.3) ellipse (1.5cm and 1.5cm);
\draw (-0.8,-2.5) node {$X_1$};

\draw [dotted] (0,-1.4) ellipse (2cm and 4cm);
\draw (0,2) node {$X$};

\foreach \pos/\name in  {{(3,-3.7)/f}, {(3,-2.7)/g}, {(3,-1.7)/h}, {(3,-0.7)/i}, {(3,0.3)/j}, {(3,1.3)/k}}
        \node[green] (\name) at \pos {};

\draw [dotted] (3,-1.2) ellipse (0.9cm and 3.6cm);
\draw (3,2) node {$Y$};
\end{tikzpicture}\quad

\end{minipage}
\begin{minipage}[t]{0.5\textwidth}

\begin{tikzpicture}[style=thick]
\foreach \pos/\name in  {{(18:1.5cm)/a}, {(90:1.5cm)/b},  {(162:1.5cm)/c}, {(234:1.5cm)/d}, {(306:1.5cm)/e}}
        \node[green] (\name) at \pos {};
\draw (a) -- (b) -- (c) -- (d) -- (e) -- (a);

\foreach \pos/\name in  {{(-1,-4)/n}, {(0,-2.5)/l}, {(1,-4)/m}}
        \node[green] (\name) at \pos {};
\draw (n) -- (l) -- (m) -- (n);

\foreach \pos/\name in  {{(3,-3.7)/f}, {(3,-2.7)/g}, {(3,-1.7)/h}, {(3,-0.7)/i}, {(3,0.3)/j}, {(3,1.3)/k}}
        \node[green] (\name) at \pos {};

\foreach \a in {f, g, h, i, j, k}
	\foreach \b in {a, b, c, d, e, l, m, n}
		\draw (\a) -- (\b);

\draw [dotted] (0.9,-1.4) ellipse (3.6cm and 4cm);
\draw (0.9,2) node {$Q_4$};

\end{tikzpicture}

\end{minipage}

\caption[$Q_4$: 8-regular graph with no internal partition (right) is composed from components (left)]{$Q_4$: 8-regular graph with no internal partition (right) is composed from components (left)}
\label{q4}
\end{figure}
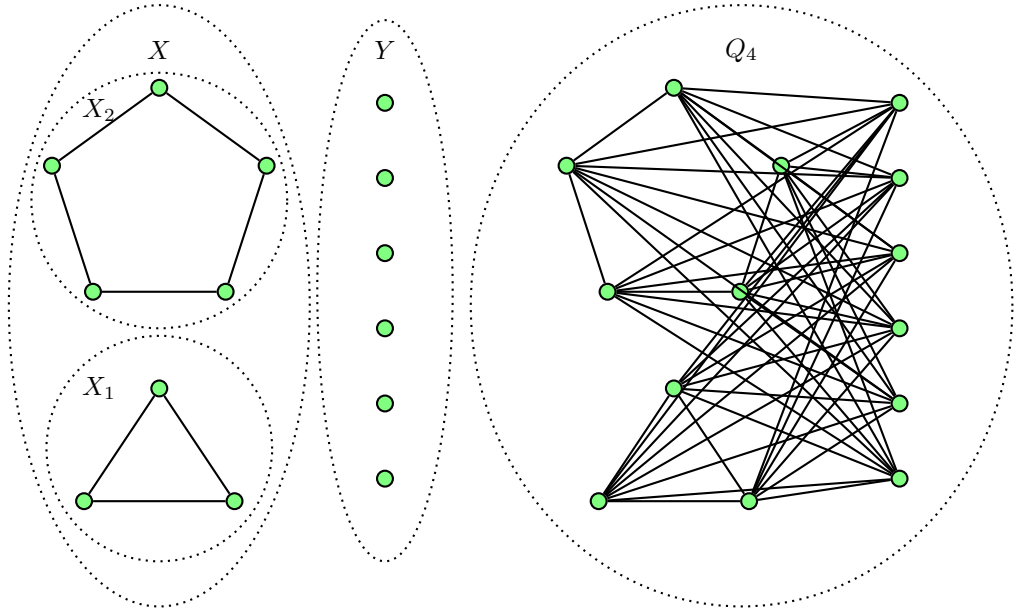

We construct a class of graphs without an internal partition, in which both $d$ and $n-d$ are unbounded:

Given an integer $m > 2$, construct the graph $Q_m$ as follows (see Figure~\ref{q4}):
\begin{enumerate}
\item Start with a $X_1 := K_{m-1}$ component.
\item Let $X_2$ be an $(m+1)$-vertex, $(m-2)$-regular graph, and let $X$ be the graph with components $X_1, X_2$.
\item Let $Y := \bar{K}_{m+2}$ (i.e. $Y$ has $m+2$ isolated vertices).
\item Finally $Q_m$ is attained by adding to $X, Y$ the complete bipartite graph between $V(X)$ and $V(Y)$.
\end{enumerate}

$Q_m$ is $2m$-regular with $3m+2$ vertices. The first few such graphs are $Q_3 (n=11, d=6)$, $Q_4 (n=14, d=8)$, $Q_5 (n=17, d=10)$, \ldots.

\begin{proposition}
$Q_m$ has no internal partition.
\end{proposition}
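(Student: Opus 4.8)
The plan is to assume toward a contradiction that $Q_m$ admits an internal partition $(A,B)$ and to exploit the rigid three-block structure $V(Q_m)=V(X_1)\,\dot\cup\,V(X_2)\,\dot\cup\,V(Y)$. Since $Q_m$ is $2m$-regular, being internal means that every vertex has at least $m$ neighbours on its own side. I would record the intersection sizes $a_i=|A\cap V(X_i)|$ and $b_i=|B\cap V(X_i)|$ for $i\in\{1,2\}$, together with $a_Y=|A\cap V(Y)|$ and $b_Y=|B\cap V(Y)|$, and keep in mind the block sizes $|V(X_1)|=m-1$, $|V(X_2)|=m+1$, $|V(Y)|=m+2$, as well as the key fact that each vertex of $X_2$ has exactly $m-2$ neighbours inside $X_2$.

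First I would analyse the independent set $Y$, every vertex of which is adjacent to all of $V(X)$ and to nothing else. If $Y$ lies entirely on one side, say $Y\subseteq A$, then every vertex of $X$ on side $B$ would have all its same-side neighbours inside $X$; but an $X_1$-vertex would then need $m$ neighbours among the at most $m-2$ other vertices of $X_1$, and an $X_2$-vertex would need $m$ among its $m-2$ internal neighbours, both impossible. Hence $B\subseteq V(Y)$, forcing $B=\emptyset$, a contradiction. Therefore $Y$ meets both sides, and counting the neighbours of a $Y$-vertex on each side gives $a_1+a_2=b_1+b_2=m$.

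Next I would show that $X_1=K_{m-1}$ is monochromatic. If both $a_1,b_1>0$, pick $x\in X_1\cap A$ and $x'\in X_1\cap B$; their internal constraints read $(a_1-1)+a_Y\ge m$ and $(b_1-1)+b_Y\ge m$, and adding them while using $a_1+b_1=m-1$ and $a_Y+b_Y=m+2$ yields $2m-1\ge 2m$, which is absurd. So $X_1$ sits entirely on one side; by symmetry assume $X_1\subseteq A$. Then $a_1=m-1$ together with $a_1+a_2=m$ forces $a_2=1$ and $b_2=m$. Let $x^\ast$ denote the unique vertex of $X_2$ on side $A$.

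The decisive step, and the one I expect to be the main obstacle, is to turn the lone vertex $x^\ast$ against the $m$ vertices of $X_2\cap B$. Since $x^\ast$ is the only $A$-vertex of $X_2$, it has no same-side neighbour inside $X_2$, so its internal constraint gives $a_Y\ge m$ and hence $b_Y\le 2$. Consequently every vertex of $X_2\cap B$ must have at least $m-b_Y\ge m-2$ of its $m-2$ internal neighbours inside $B$, i.e.\ \emph{all} of them. But the $m-2$ internal neighbours of $x^\ast$ all lie in $B$, and each of them is adjacent to $x^\ast\in A$, so each has at most $m-3$ of its $X_2$-neighbours in $B$, contradicting the requirement of $m-2$. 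This closes the argument. The subtlety is that $X_2$ is an \emph{arbitrary} $(m-2)$-regular graph, so the proof must rely only on the uniform internal degree $m-2$ rather than on any specific structure of $X_2$; the forced quantities $a_2=1$ and $b_Y\le 2$ are precisely what make this purely local adjacency count fatal.
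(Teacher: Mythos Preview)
Your proof is correct and follows essentially the same logical skeleton as the paper's: first pin down that $X$ is split evenly ($|X\cap A|=|X\cap B|=m$), then show $X_1$ must lie entirely on one side, forcing a $1$-versus-$m$ split of $X_2$, and derive a contradiction from the lone vertex $x^\ast$. The only cosmetic difference is that the paper phrases the argument via the complement $\bar Q_m$ and the language of $q$-internal partitions of $X$, whereas you carry out the same counts directly in $Q_m$; your final step (looking at a neighbour of $x^\ast$) is in fact slightly cleaner, since it does not implicitly assume $X_2$ is connected.
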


\begin{proof}
Suppose to the contrary that $(A,B)$ is an internal partition of $Q_m$ with $|A|=a$ and $|B|=b$. In the complementary graph $\bar{Q}_m$, the set $Y$ are the vertices of a $K_{m+2}$ component. In the partition $(A,B)$ of $\bar{Q}_m$, each vertex in $A$ (resp. $B$) has outdegree at least $b-m$ (resp. $a-m$). The only way to partition $K_{m+2}$ to meet these requirements is to have $a-m$ of its vertices in $A$ and the other $b-m$ vertices in $B$

Therefore $|V(X) \cap A| = |V(X) \cap B| = m$. Also $\forall{x \in (V(X) \cap A)}, d_{V(X) \cap A}(x) \geq m - (a-m) = 2m-a$, and $\forall{x \in (V(X) \cap B)}, d_{V(X) \cap B}(x) \geq m - (b-m) = 2m-b$. Therefore $(V(X) \cap A, V(X) \cap B)$ is a $q$-internal partition of $X$ for $q = \frac{2m-a}{m-2}$. Now $X_1$, being complete, has no $q$-internal partition for any $q$. Therefore its vertices are either all in $A$ or all in $B$. Say in $A$. Then $|V(X_2) \cap B| = m - |V(X_1)| = 1$, so there is a single $B$-vertex in the $X_2$ component, but a partition of a connected graph into a single vertex and its complement is not $q$-internal for any $q$. A contradiction.
\qed
\end{proof}

The reader will note that for all known examples $G$ of even-degree regular graphs with no internal partition, the complement $\bar G$ is disconnected. We do not know whether this is true in general, but we observe that if true, this implies $2d > n$. To the best of our knowledge, this may hold in general:

\begin{conjecture}
\label{conj2d}
For every even $d$, every $d$-regular graph with no internal partition has less than $2d$ vertices.
\end{conjecture}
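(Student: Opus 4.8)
Write $d=2k$, so that an internal partition is exactly a partition of $V$ into two $k$-cohesive parts. The plan is to argue by contradiction and to push the strategy of Theorem~\ref{6regular} to general even $d$. Assume $G$ is $d$-regular with $n\ge 2d$ and has no internal partition, and let $(A,B)$ be the near-bisection minimizing $|E(A,B)|$. By Lemma~\ref{l} one part, say $A$, is $k$-cohesive, and since $(A,B)$ is not internal, $B$ is not. I would then run the peeling process of Theorem~\ref{6regular}: while some $y\in B$ has $d_A(y)>k$, move it to $A$. Moving such a $y$ keeps $A$ $k$-cohesive, and the process can halt only when the current $B$ is itself $k$-cohesive (hence an internal partition) or empty; by hypothesis it must empty $B$, which certifies that $B$ is a $k$-crumble.

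Two opposing estimates on $|E(A,B)|$ then emerge. Each move lowers the cut by $2\bigl(d_A(y)-k\bigr)\ge 2$, and when only $j\le k-1$ vertices remain in $B$ the moved vertex satisfies $d_A(y)\ge 2k-j+1$, contributing an extra $2(k-j)$; summing yields $|E(A,B)|\ge 2|B|+k(k-1)$. In the other direction, Lemma~\ref{l} caps the number of minimum-indegree vertices of $A$: in its second alternative at most $k+1$ vertices of $A$ have outdegree $k$ while the rest have outdegree at most $k-1$, giving $|E(A,B)|\le (k-1)|A|+(k+1)$ (and $|E(A,B)|\le (k-1)|A|$ in the first alternative). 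For $k=3$ these read $2|B|+6\le |E(A,B)|\le 2|A|+4$ and become contradictory for a near-bisection once $n$ is large; this is precisely the engine of Theorem~\ref{6regular}.

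The main obstacle is that the contradiction evaporates for $k\ge4$: the upper bound grows like $(k-1)|A|\approx (k-1)n/2$, which for a near-balanced cut dominates the lower bound $2|B|+k(k-1)\approx n$, so no single cut count can close the argument. To make progress I would seek extra structural leverage: (i) use that a $k$-crumble $B$ is $(k-1)$-degenerate, so $|E(B)|\le (k-1)|B|$, and combine this with the $(k+1)$-cohesion of $A$ inside the global identity $|E|=kn=|E(A)|+|E(B)|+|E(A,B)|$; (ii) replace the single near-bisection by the whole family of cut-minimizing partitions with $|A|=t$ furnished by Lemma~\ref{l}, optimizing the imbalance $t$ so that both estimates are tightened at the same point; and (iii) attempt an induction on $d$, splitting off a $k$-cohesive set via the corollary to Lemma~\ref{l} and recursing on the remainder despite the loss of regularity.

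I expect the genuinely hard step --- the reason Conjecture~\ref{conj2d} is stated as open --- to be controlling the interface between a dense, cohesive block and a sparse, crumbly one. Every refinement above must ultimately rule out a large $d$-regular graph built from one $(k+1)$-cohesive block joined across a thin cut to a $(k-1)$-degenerate block in which no \emph{second} $k$-cohesive set is hidden. Showing that the crumbliness of $B$ together with the regularity of $G$ forces such a set to re-emerge once $n\ge 2d$ is the step I believe needs a new idea, most plausibly a finer analysis of $\bar G$, whose disconnectedness alone would already yield $n<2d$ as observed just before Conjecture~\ref{conj2d}.
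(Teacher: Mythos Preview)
The statement you are addressing is Conjecture~\ref{conj2d}; the paper offers no proof and explicitly leaves it open. So there is no ``paper's own proof'' to compare against, and your write-up is in fact not a proof either but a discussion of why the method of Theorem~\ref{6regular} does not extend. On that point your analysis is accurate: the lower bound $|E(A,B)|\ge 2|B|+k(k-1)$ coming from the crumble property and the upper bound $|E(A,B)|\le (k-1)|A|+(k+1)$ coming from Lemma~\ref{l} are no longer in tension for $k\ge 4$ on a near-bisection, which is exactly why the paper stops at $d=6$.

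Your proposed avenues (i)--(iii) are reasonable heuristics but none of them constitutes a proof step, and you are candid about this. One small correction: in avenue~(i) you write that $A$ is $(k+1)$-cohesive, but Lemma~\ref{l} only guarantees $A$ is $k$-cohesive (with a bounded number of indegree-$k$ vertices in case~2); the $(k+1)$-cohesion in part~(\ref{l2c}) applies to $A\cup\{x\}$ for a specific $x$, not to $A$ itself. Also, your final remark that disconnectedness of $\bar G$ would ``already yield $n<2d$'' is slightly off: disconnectedness of $\bar G$ gives $2d>n-2$, i.e.\ $n\le 2d+1$, not $n<2d$; the paper's sentence before Conjecture~\ref{conj2d} says only that it ``implies $2d>n$'', which for even $d$ and the relevant parity still leaves $n=2d$ to be excluded separately. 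In short: your proposal is an honest reconnaissance of an open problem, not a proof, and the paper contains nothing further to compare it to.
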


We return to the problem of the existence of a $q$-internal partition for arbitrary regular graphs. There is a distinction between integral and non-integral partitions. Non-integral partitions are rarer than integral partitions, since every $q$-internal partition of a $d$-regular graph $G$ is also an integral $q'$-internal partition of $G$ for $q' = \lfloor qd \rfloor/d$ as well as for $q' = \lceil qd \rceil/d$. We make the following conjecture:

\begin{conjecture}
For every integer $d$ and $1>q>0$ such that either (i) $q = \frac{1}{2}$ or (ii) $qd$ is an integer, there is an integer $\mu$ such that every $d$-regular graph of order $\geq \mu$ has a $q$-internal partition.
\end{conjecture}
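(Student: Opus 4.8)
The plan is to derive the integral case (ii) and the half-integral case (i) from the same engine used for Theorem~\ref{6regular}: the structure theorem Lemma~\ref{l} together with a vertex-migration argument. Write $l:=qd$ and $m:=(1-q)d=d-l$; in case (ii) both are integers, and a $q$-internal partition is precisely a partition $(A,B)$ with $A$ that is $l$-cohesive and $B$ that is $m$-cohesive. Interchanging $q$ with $1-q$ swaps the two roles, so I may assume $l\le m$, i.e. $l\le\lfloor d/2\rfloor$: the $A$-side then carries only a light cohesion demand while the $B$-side carries the heavy one. It is worth separating two regimes by the cohesion \emph{budget} $l+m$. In case (ii), and in case (i) with $d$ even (where $l=m=d/2$), the budget is exactly $l+m=d$, matching the guarantee of Lemma~\ref{l}; in case (i) with $d$ odd one needs $l=m=\lceil d/2\rceil$ and hence $l+m=d+1$, one unit \emph{above} what a minimum cut supplies, so this is the hardest sub-case.

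The core argument follows Theorem~\ref{6regular}. Fix a size $k$ and let $(A,B)$ minimise $|E(A,B)|$ over partitions with $|A|=k$; by Lemma~\ref{l} its two sides have minimum indegrees summing to $d$ or $d-1$, so the more cohesive side is $\lfloor d/2\rfloor$-cohesive and may be taken as $A$ (since $l\le\lfloor d/2\rfloor$, $A$ is $l$-cohesive). If $B$ is already $m$-cohesive the partition is $q$-internal and we are done, so assume it is not. Now two bounds are played against each other. For the upper bound, every $x\in A$ has outdegree $d_B(x)=d-d_A(x)\le m$, and by Lemma~\ref{l}(\ref{l2b}) the vertices of $A$ of maximum outdegree (those of minimum indegree) form one side of a complete bipartite subgraph, hence number at most $d$; summing outdegrees gives $|E(A,B)|\le (m-1)|A|+C(d)$ with $C(d)$ depending only on $d$. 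For the lower bound I would run the migration loop of Theorem~\ref{6regular}: while some $y\in B$ has $d_A(y)>l$, move it into $A$ (which preserves $l$-cohesion of $A$, since $y$ enters with indegree $>l$ and only raises its old neighbours' indegrees). The loop can halt only at a $q$-internal partition, excluded by assumption, or at $B=\emptyset$; each move changes the cut by $d-2d_A(y)$, and telescoping over the moves expresses the initial cut as the total decrease. When $l=m$ every decrease is at least $l-m+2=2$, giving $|E(A,B)|\ge 2|B|-C'(d)$, and comparing with the upper bound forces $n$ below a constant $\mu(d,q)$, the desired contradiction.

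The first and most serious obstacle lives in the unbalanced regime $l<m$. There the per-move decrease $2d_A(y)-d\ge l-m+2$ may be zero or negative, so migrating a vertex can \emph{increase} the cut and the telescoping no longer yields a positive multiple of $|B|$; the mechanism that powers Theorem~\ref{6regular} (where $l=m=3$) simply fails. A related difficulty is that Lemma~\ref{l} controls only the \emph{sum} of the two minimum indegrees, not their split, so the minimum cut can be lopsided (one side almost a clique, the other with an indegree-$0$ vertex) and hence not $q$-internal. I would attempt to repair both by migrating against a weighted potential such as $|E(A,B)|-\lambda|A|$, a size-constrained analogue of the functional $w$ maximised in Proposition~\ref{coexist}, with $\lambda$ tuned to the target split so that every admissible move has a definite sign. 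The catch is that the \emph{unconstrained} optimum of any such potential is the trivial partition (the empty set is perfectly cohesive), which is exactly why internal partitions are hard where external ones are free; the argument must therefore be confined to a window of sizes, which is the role Lemma~\ref{l} plays and which one must now combine with the weighting.

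The second obstacle is the intrinsic lossiness of the edge count, visible already in the balanced case. The pinch succeeds only when the lower-bound rate exceeds $m-1$; for $q=\tfrac12$ this reads $2>d/2-1$, i.e. $d\le 6$, which is precisely why Theorem~\ref{6regular} stops at $d=6$. For $d\ge 8$ the two bounds are mutually consistent and give no contradiction, and in case (i) with $d$ odd one must additionally gain a full unit of cohesion on \emph{both} sides at once. I therefore do not expect this plan to settle the conjecture outright: case (i) with general even $d$ already \emph{contains} the main open conjecture on internal partitions, so a complete proof would require a genuinely sharper edge-counting, or a different route entirely (a probabilistic argument, or the exact-partition duality with $\bar{G}$ of Propositions~\ref{coexist} and~\ref{dual}). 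What the plan should deliver unconditionally is the conjecture in the ranges where the two bounds close, namely $l$ small, or $l$ near $d/2$ for small $d$, together with the weaker ``$q$-internal partition for some $q$'' statement that the duality most naturally supports.
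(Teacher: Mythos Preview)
The statement you are attempting to prove is a \emph{conjecture} in the paper, not a theorem: the paper offers no proof, only numerical evidence ($\mu=8$ for $d=3$, and experiments suggesting $\mu=18,26$ for $d=5,7$) and the supporting Theorem~\ref{existq} that $q$-internal partitions exist for \emph{some} $q$. So there is no ``paper's own proof'' to compare against, and you should not expect your plan to close.

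That said, your analysis of \emph{why} the Theorem~\ref{6regular} machinery does not extend is accurate and worth recording. You correctly isolate two obstructions. First, in the unbalanced regime $l<m$ the per-move cut change $2d_A(y)-d$ can be nonpositive, so the migration loop no longer yields a lower bound that grows with $|B|$; Lemma~\ref{l} guarantees only the \emph{sum} of the minimum indegrees, not the split, so the min-cut near-bisection need not be close to $q$-internal when $q\ne\tfrac12$. Second, even in the balanced case the upper bound $|E(A,B)|\le (m-1)|A|+O(d)$ and the lower bound $|E(A,B)|\ge 2|B|-O(d)$ only contradict each other when $2>m-1$, i.e.\ $d\le 6$; this is exactly the threshold at which the paper's argument stops, and for $d\ge 8$ the two bounds are compatible. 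Your own concluding paragraph already concedes both points, so the proposal is best read as a diagnosis of the conjecture's difficulty rather than a proof attempt. The weighted-potential idea you float (optimising $|E(A,B)|-\lambda|A|$ over a size window) is reasonable but, as you note, its unconstrained optimum is trivial, and no one has yet found the right constrained version; this is precisely why the conjecture is open.
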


As already noted, $\mu=8$ for $d=3, q=\frac{1}{2}$.
Numerical experiments suggest that for $q=\frac{1}{2}$ and $d=5, 7$ there holds $\mu=18, 26$ respectively.

In fact, the following stronger statement appears to be true: There exists an integer $\mu'$ that depends only on $d_{min}(G),d_{max}(G)$ and on $q$ such that every graph $G=(V,E)$ with order at least $\mu'$ has a $q$-internal partition if (i) $q = \frac{1}{2}$ or (ii) $qd_G(v)$ is a positive integer for all $v \in V$.

For other values of $q$ (i.e. with non-integral values of $qd$ other than $q = \frac{1}{2}$), we make no guesses. We note that, for example, a connected graph cannot have a $q$-internal partition for $0 < q < \frac{1}{d}$. On the other hand, for $\frac{1}{d} < q < \frac{2}{d}$, a shortest cycle and its complement often yield a $q$-internal partition (e.g., when the girth is $\ge 5$).

Although the above conjecture remains open, the following theorem shows that every incomplete graph has an integral $q$-internal partition for {\em some} $q$. Moreover, for $d$ fixed and growing $n$ the number of such distinct partitions tends to $\infty$.

\begin{theorem}
\label{existq}
A $d$-regular graph $G$ of order $n>d+1$ has a $q$-internal partition $(A,B)$ for some $q \in (0,1)$ with $qd$ an integer. Such partitions exist for at least $\frac{n-d-1}{d}$ different values of $|A|$.
\end{theorem}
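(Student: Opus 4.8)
The plan is to parametrise candidate partitions by the size of one side and to read off \(q\)-internal partitions from the ``max-edge'' profile of \(G\). For \(0\le k\le n\) let \(g(k)=\max_{|S|=k}|E(S)|\) be the largest number of edges induced by a \(k\)-set, and put \(\delta(k)=g(k+1)-g(k)\). Since every vertex has degree \(d\), adjoining one vertex to a set raises its number of internal edges by between \(0\) and \(d\), so \(\delta(k)\in\{0,1,\dots,d\}\); moreover \(\delta(0)=0\) and \(\delta(n-1)=d\). Because \(|E(A,B)|=dk-2|E(A)|\) when \(|A|=k\), the partition \((A,B)\) in which \(A\) is a densest \(k\)-set minimizes the cut over \(|A|=k\), so Lemma \ref{l} applies to it.

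The first step is to show that this partition is \(q\)-internal whenever \(\delta(k-1)\ge\delta(k)\). Deleting a minimum-indegree vertex of \(A\) leaves a \((k-1)\)-set, so the minimum indegree of \(A\) is at least \(g(k)-g(k-1)=\delta(k-1)\); adjoining to \(A\) the outside vertex with the most neighbours in \(A\) yields a \((k+1)\)-set, so every \(y\in B\) has at most \(\delta(k)\) neighbours in \(A\), i.e. indegree at least \(d-\delta(k)\) inside \(B\). Hence the two minimum indegrees sum to at least \(d+\delta(k-1)-\delta(k)\ge d\). Taking \(l\) equal to the minimum indegree of \(A\) (clamped into \(\{1,\dots,d-1\}\)) makes \(A\) be \(l\)-cohesive and \(B\) be \((d-l)\)-cohesive, which is exactly a \(q\)-internal partition with \(q=l/d\) and \(|A|=k\); this is the ``Case \ref{l1}'' alternative of Lemma \ref{l}.

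For existence, observe that the \(n-1\) comparisons \(\delta(k-1)\) versus \(\delta(k)\) cannot all be strict ascents: a maximal run of consecutive ascents visits strictly increasing values inside \(\{0,\dots,d\}\), so it has length at most \(d\), and a strictly increasing \(\delta(0)<\dots<\delta(n-1)\) is impossible once \(n>d+1\). Thus some index satisfies \(\delta(k-1)\ge\delta(k)\), giving one \(q\)-internal partition. To get many, I would lower bound the number of non-ascents: the \(n-1\) steps split into ascent runs of length at most \(d\) separated by non-ascents, and each non-ascent is a distinct admissible value of \(|A|\) carrying a \(q\)-internal partition, which a short run-counting argument turns into a bound of the form \(\tfrac{n-d-1}{d}\).

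The step I expect to be the main obstacle is the precise constant in this last count, together with the degenerate sizes. The bare run-length bound most directly yields on the order of \(\tfrac{n-d-1}{d+1}\) good sizes, and squeezing it to the stated \(\tfrac{n-d-1}{d}\) requires exploiting that an ascent run of the full length \(d\) forces \(\delta\) to pass through both \(0\) and \(d\), which constrains the neighbouring steps. The second delicate point is the requirement \(q\in(0,1)\): when \(\delta(k-1)=0\) (respectively \(\delta(k)=d\)) the bounds above only give minimum indegree \(0\), so one must rule out \(l=0\) or \(l=d\). Such \(k\) occur only where the profile \(\delta\) touches its extreme values, where the densest set is essentially a union of components and \(q\)-internal partitions are available trivially, so I would dispose of these cases separately rather than through the main counting.
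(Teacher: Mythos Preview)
Your approach is genuinely different from the paper's. The paper never looks at the densest-$k$-set profile at all: instead it passes to the complement $\bar G$, which is $(n-d-1)$-regular, picks any $r\in(0,1)$ with $r(n-d-1)\notin\mathbb Z$, invokes Proposition~\ref{coexist} to get a $(1-r)$-external partition $(A,B)$ of $\bar G$, and then checks by a two-line computation that this same $(A,B)$ is a $q$-internal partition of $G$ with $qd=|A|-\lceil r(n-d-1)\rceil$. The parameter $p=\lceil r(n-d-1)\rceil$ then ranges over $\{1,\dots,n-d-1\}$ while $|A|-p$ stays in a window of width $d$, and pigeonhole gives the count. So the paper's key lemma is the trivially-available external partition of the complement, whereas yours is the concavity-type inequality $\delta(k-1)\ge\delta(k)$ for the max-edge profile. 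Your route is more structural and arguably more illuminating about \emph{which} sizes $|A|$ work; the paper's route is shorter and avoids the boundary analysis entirely.

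That said, your proposal has two gaps that you correctly flag but do not actually close. First, the counting: your run-length argument, and indeed any argument using only $\delta(k)\in\{0,\dots,d\}$, $\delta(0)=0$, $\delta(n-1)=d$, yields at most $(n-d-1)/(d+1)$ non-ascents, because a fixed value of $|A|$ is compatible with $d+1$ consecutive values of the parameter. Your proposed sharpening (``a full-length ascent run forces $\delta$ through $0$ and $d$, constraining neighbours'') does not obviously recover the missing factor; the neighbours of such a run are already non-ascents by maximality, so no new ones are produced. In fact the paper's own pigeonhole has the same $d$-versus-$(d+1)$ ambiguity, so do not over-invest in squeezing this constant. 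Second, the endpoints $q\in\{0,1\}$: your claim that ``$\delta(k-1)=0$ forces the densest $(k-1)$-set to be a union of components'' is correct, but for disconnected $G$ this happens at \emph{many} sizes $k$, not just a few extreme ones, so ``dispose of these cases separately'' is more work than you suggest. Finally, your reference to Lemma~\ref{l} is decorative: your direct bounds $\min_A d_A\ge\delta(k-1)$ and $\min_B d_B\ge d-\delta(k)$ already give the conclusion without invoking the lemma, and the lemma by itself does not single out Case~\ref{l1} over Case~\ref{l2a}.
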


\begin{proof}
$\bar{G}$ is ($n-d-1$)-regular. Select $r \in (0,1)$ such that $r(n-d-1)$ is not an integer. This is always possible since $n-d-1 \ne 0$. By Proposition \ref{coexist} $\bar{G}$ has an $(1-r)$-external partition $(A,B)$.

In this partition of $\bar{G}$, $\forall x \in A, d_{\bar{A}}(x) < r(n-d-1)$. The inequality is strict since $r(n-d-1)$ is not an integer. Similarly $\forall x \in B, d_{\bar{B}}(x) < (1-r)(n-d-1)$.

Considering the partition $(A,B)$ in $G$, we have $\forall x \in A, d_A(x) > |A| - 1 - r(n-d-1)$, and $\forall x \in B, d_B(x) > |B| - 1 - (1-r)(n-d-1)$. Therefore
\begin{align}
\label{eq1}
\forall x \in A, & d_A(x) \geq |A| - 1 - \lfloor{r(n-d-1)}\rfloor =  |A|  - \lceil{r(n-d-1)}\rceil \\
\label{eq2}
\forall x \in B, & d_B(x) \geq |B| - 1 - \lfloor{(1-r)(n-d-1)}\rfloor =  |B|  - \lceil{(1-r)(n-d-1)}\rceil
\end{align}

Set $q = (|A|  - \lceil{r(n-d-1)}\rceil) / d$. By \eqref{eq1} the minimal indegree of $A$ is suitable for a $q$-internal partition. As for $B$, note that $\lfloor{(1-r)(n-d-1)}\rfloor +  \lceil{r(n-d-1)}\rceil = n-d-1$. So:
\begin{equation}
|B| - 1 - \lfloor{(1-r)(n-d-1)}\rfloor = n - |A| - 1 - (n-d-1) + \lceil{r(n-d-1)}\rceil = (1 - q)d
\end{equation}

Therefore by \eqref{eq2} the minimal indegree of $B$ is also suitable, and $(A,B)$ is a $q$-internal partition.

From \eqref{eq1} we see that:
\begin{equation}
\label{boundA}
 \lceil{r(n-d-1)}\rceil \leq |A| \leq  \lceil{r(n-d-1)}\rceil  + d
\end{equation}

So for any given $r$, $|A|$ has a range of at most $d$. Since $\lceil{r(n-d-1)}\rceil$ can take on $n-d-1$ values, $|A|$ takes on at least $\frac{n-d-1}{d}$ different values. The number of distinct $q$-internal partitions is at least as many.
\qed
\end{proof}

For $d$ fixed there are just $d-1$ values of $q \in (0,1)$ for which $qd$ is integral. By Theorem \ref{existq} every $d$-regular graph has $\Omega(n)$ distinct integral $q$-internal partitions. While this does not prove the existence of a $q$-internal partition for any {\em specific} $q$, it suggests that this becomes more likely as $n$ grows.

From Theorem \ref{existq} we derive an efficient algorithm that generates integral $q$-internal partitions for many and, for $n \gg d$, often {\em all} possible values of $q$:
\begin{algorithm}
\label{generate}
Given a $d$-regular graph $G=(V,E)$ with $n = |V|$:
\begin{enumerate}
\item Set $A \leftarrow \emptyset, B \leftarrow V$.
\item For $p = 1, \ldots, n - d - 1$
\begin{enumerate}
\item Repeat while $\exists{x \in B}, d_{\bar{A}}(x) < p$ or $\exists {x \in A}, d_{\bar{B}}(x) < n - d - p$
\begin{enumerate}
\item If $x \in A$ set $A \leftarrow A \setminus \{x\}, B \leftarrow B \cup \{x\}$ 
\item else set  $A \leftarrow A \cup \{x\}, B \leftarrow B \setminus \{x\}$
\end{enumerate}
\item Set $A_p \leftarrow A, B_p \leftarrow B$
\end{enumerate}
\end{enumerate}
\end{algorithm}

This algorithm generates the partitions $(A_p,B_p), p \in [n-d-1]$ of $\bar{G}$ each of which is $q$-external for $q = p/(n-d-1)$, by greedily moving vertices. When $p > 1$, the starting point for $(A_p,B_p)$ is $(A_{p-1},B_{p-1})$.

From Theorem \ref{existq} and its proof, $(A_p,B_p)$ is also a $q$-internal partition of $G$ for $qd = |A_p| - p$. Note that $A_1$ is a maximal independent set in $\bar{G}$, and so is $B_{n-d-1}$. Now when $n \gg  d$ the size of a maximal independent set is 2. Therefore, $|A_1| = 2$, $|A_{n-d-1}| = n-2$ and so $(A_1,B_1)$ is a $\frac{1}{d}$-internal partition of $G$ and $(A_{n-d-1},B_{n-d-1})$ is a $\frac{d-1}{d}$-internal partition of $G$.

Additionally from \eqref{boundA} $p \leq |A_p| \leq p+d$, so $|A_p|$ generally grows from $2$ to $n-2$ as $p$ grows from $1$ to $n-d-1$. The {\em average} of $|A_p| - |A_{p-1}|$ is $(n-4)/(n-d-2) \simeq 1$. Now since $(A_p,B_p)$ is a $q$-internal partition of $G$ for $q = \frac{|A_p| - p}{d}$, if it turns out that for all $p \in [n-d-2]$, $|A_{p+1}| - |A_p| < 3$, the algorithm generates all possible integral $q$-internal partitions of $G$.

Conversely, if for some graph $G$, some integral $q$-internal partition does not exist, then any sequence of partitions  $(A_p,B_p), p \in [n-d-1]$, whether generated by Algorithm \ref{generate} or by any other means, will exhibit a gap  $|A_p| - |A_{p-1}| \geq 3$ for some $p > 1$. For example, considering the graph $K_{3,3,3}$ (Figure \ref{d=n-3 example}) shown not to have an internal partition: $n-d-1 = 2$ and $|A_1| = 3, |A_2| = 6$.


\begin{thebibliography}{9}                                                                                           %

\bibitem[Bazgan,2003]{Bazgan2003} \textsc{Bazgan, C., Tuza, Z., and Vanderpooten, D.}\: On the existence and determination of satisfactory partitions in a graph. In \textit{Algorithms and Computation} Springer Berlin Heidelberg (2003): 444-453.

\bibitem[Bazgan,2006]{Bazgan2006} \textsc{Bazgan, C., Tuza, Z., and Vanderpooten, D.}\: The satisfactory partition problem. \textit{Discrete applied mathematics}, \textbf{154} no. 8 (2006): 1236-1245.

\bibitem[Bazgan,2010]{Bazgan2010} \textsc{Bazgan, C., Tuza, Z., and Vanderpooten, D.}\:  Satisfactory graph partition, variants, and generalizations. \textit{European Journal of Operational Research}, \textbf{206} no. 2  (2010): 271-280.



\bibitem[Gerber,2000]{Gerber} \textsc{Gerber, Michael U., and Daniel Kobler}:\ Algorithmic approach to the satisfactory graph partitioning problem. \textit{European Journal of Operational Research} \textbf{125}, no. 2 (2000): 283-291.

\bibitem[Kaneko,1998]{Kaneko} \textsc{Kaneko, A.}:\  On decomposition of triangle free graphs under degree constraints. \textit{Journal of Graph Theory}, \textbf{27} no. 1 (1998): 7-9.



\bibitem[Morris,2000]{Morris} \textsc{Morris, S.}\:  Contagion. \textit{The Review of Economic Studies}, \textbf{67}(1), 57-78, 2000.

\bibitem[Shafique,2002]{Shafique} \textsc{Shafique, K. H., and Dutton, R. D}\: On satisfactory partitioning of graphs. \textit{Congressus Numerantium}  (2002): 183-194.

\bibitem[Stiebitz,1996]{Stiebitz} \textsc{Stiebitz, Michael}:\ Decomposing graphs under degree constraints. \textit{Journal of Graph Theory} \textbf{23}, no. 3 (1996): 321-324.

\bibitem[Thomassen,1983]{Thomassen} \textsc{Thomassen, Carsten}:\ Graph decomposition with constraints on the connectivity and minimum degree. \textit{Journal of Graph Theory} \textsc{7}, no. 2 (1983): 165-167.

\end{thebibliography}
\end{document}